\newcommand{\Zint}{\mathbb {Z}}    
\newcommand{\Rea}{\mathbb {R}}      
\newcommand{\Cplx}{\mathbb {C}}     
\newcommand{\halmos}{\rule{5pt}{5pt}}
\numberwithin{equation}{section}
\newtheorem{proposition}{\bf Proposition}[section]
\newtheorem{theorem}[proposition]{\bf Theorem}
\newtheorem{conjecture}{\bf Conjecture}
\begin{document}

\title[Middle convolution with irregular singularities]
{Introduction to middle convolution for differential equations with irregular singularities}
\author{Kouichi Takemura}
\address{Department of Mathematical Sciences, Yokohama City University, 22-2 Seto, Kanazawa-ku, Yokohama 236-0027, Japan.}
\thanks{Current address: Department of Mathematics, Faculty of Science and Technology, Chuo University, 1-13-27 Kasuga, Bunkyo-ku Tokyo 112-8551, Japan.
E-mail: takemura@math.chuo-u.ac.jp}
\dedicatory{Dedicated to Professor Tetsuji Miwa on his sixtieth birthday}
\begin{abstract}
We introduce middle convolution for systems of linear differential equations with irregular singular points,
and we presend a tentative definition of the index of rigidity for them.
Under some assumption, we show a list of terminal patterns of irreducible systems of linear differential equations by iterated application of middle convolution when the index is positive or zero.
\end{abstract}

\keywords{middle convolution; irregular singularity;  Euler's integral transformation; index of rigidity}

\maketitle

\section{Introduction} 

Middle convolution was originally introduced by N. Katz in his book "Rigid local systems" \cite{Katz}, and
several others studied and reformulated it.
Dettweiler and Reiter \cite{DR1,DR2} defined middle convolution for systems of Fuchsian differential equations written as 
\begin{align}
& \frac{dY}{dz} =\left( \sum _{i=1}^r \frac{A^{(i)}}{z-t_i} \right) Y.
\label{eq:Fuchs}
\end{align}
Note that Eq.(\ref{eq:Fuchs}) has singularities at $\{ t_1, \dots ,t_r , \infty \}$ and they are all regular.
On the theory of middle convolution, the index of rigidity plays important roles and it is preserved by two operation, addition and middle convolution.
Addition is related to multiplying a function to solutions of differential equations, and middle convolution is related to applying Euler's integral transformation to them.
Middle convolution may change the size of differential equations.
It was essentially established by Katz \cite{Katz} that every irreducible system of Fuchsian differential equations whose index of rigidity is two is reduced to the system of rank one.
The procedure to obtain rank one system is called Katz' algorithm.
Subsequently it is shown that the system has integral representations of solutions which are calculated by following Katz' algorithm.

In this paper we study middle convolution for systems of linear differential equations with irregular singularities, which are written as
\begin{equation}
\frac{dY}{dz} =\left( -\sum _{j=1}^{m_0} A_j^{(0)} z^{j-1} + \sum _{i=1}^r \sum _{j=0}^{m_i} \frac{A_j^{(i)}}{(z-t_i)^{j+1}} \right) Y.
\label{eq:DEirr0}
\end{equation}
If $m_i=0$ $(i\neq 0)$ (resp. $m_0=0$), then the point $z=t_i$ (resp. the point $z=\infty $) is regular singularity.
In particular if $m_0=\dots =m_r=0$, then the system is Fuchsian.
There are several important linear differential equations which are not Fuchsian, e.g., Kummer's confluent hypergeometric equation and Bessel's equation.
We want to extend the theory of middle convolution to include those equations.

A pioneering work on middle convolution with irregular singularities was carried out by Kawakami \cite{Kaw}.
He focused on the differential equations
\begin{equation}
(zI_n - T) \frac{d\Psi }{dz} =A \Psi ,
\label{eq:okubo}
\end{equation}
which is called Okubo normal form if $T$ is a diagonal matrix.
Eq.(\ref{eq:okubo}) is called generalized Okubo normal form, if $T$ may not be diagonalizable. 
Kawakami constructed a map from generalized Okubo normal forms to linear differential equations with irregular singularities with the condition $m_0=0$,
and he showed that the map is surjective, i.e. every irreducible equation written as Eq.(\ref{eq:DEirr0}) with the condition $m_0=0$ corresponds to an irreducible equation of generalized Okubo normal form.
He considered middle convolution by using generalized Okubo normal forms, because Euler's integral transformation is on well with (generalized) Okubo normal forms.
Yamakawa \cite{Yam} gave a geometric interpretation to Kawakami's map and investigated middle convolution with irregular singularities with the condition $m_0 \leq 1$ by Harnad duality.
 
In this paper, we construct middle convolution including the case $m_0 \neq 0$ directly, i.e. without generalized Okubo normal forms nor Harnad duality. 
Our construction is explicit as we will discuss in sections \ref{sec:conv} and \ref{sec:mc}.

We firstly define convolution matrices, which are compatible with Euler's integral transformation (see Theorem \ref{thm:DRintegrepr}).
The size of convolution matrices is $nM$, where $n$ is the size of given matrices $A_j^{(i)} $ in Eq.(\ref{eq:DEirr0}) and $M= r+\sum _{i=0}^r m_i$.
The module defined by convolution matrices may not be irreducible.
We consider a quotient of convolution matrices to obtain irreducible modules in section \ref{sec:mc}, which leads to the definition of middle convolution.
We can describe the quotient spaces explicitly, and it is an advantage of our construction.
We propose a tentative definition of the index of rigidity, which is expected to be preserved by middle convolution.

We study middle convolution further on the case that $m_i \leq 1$ and $A_1^{(i)} $ is semi-simple for all $i$ in section \ref{sec:mileq1}.
In particular, we show that the index of rigidity is preserved by middle convolution on this case.
By applying middle convolutions and additions appropriately, the size of differential equations may be possibly decreased.  
Under the assumption of this section, we show that if the index of rigidity is positive and the system of differential equations is irreducible,
then the size of differential equations can be decreased to one by iterated application of middle convolution and addition.
Moreover we show a list of terminal patterns obtained by iterated application of middle convolution and addition for the case that the index of rigidity is zero.

We give some comments for future reference in section \ref{sec:future}. 

\section{Convolution} \label{sec:conv}
 
Let ${\bold A}=(A^{(0)} _{m_0}, \dots , A^{(0)} _{1}, A^{(1)} _{m_1} , \dots ,A^{(r)} _{0} )$ be a tuple of matrices acting on the finite-dimensional vector space $V$ $(\dim V=n)$.
The tuple ${\bold A}$ is attached with the system of differential equations (\ref{eq:DEirr0}).
We denote by $\langle {\bold A} \rangle $ the algebra generalted by $A^{(i)} _{j} $ $(i=0,\dots ,r,$ $ j=\delta _{i,0},\dots m_i)$.
The $\langle {\bold A} \rangle $-module $V$ (or $\langle {\bold A} \rangle $) is called irreducible if there is no proper subspace $W (\subset V)$ such that $A^{(i)}_j W \subset W $ for any $i,j$.

Set 
\begin{align}
& M= r+\sum _{i=0}^r m_i, \quad V'= \bigoplus _{i=0} ^{r}  V^{(i)} = \Cplx ^{n M} , \\
& V^{(0)}= V ^{\oplus m_0}, \quad  V^{(i)}= V ^{\oplus (m_i +1)} , \; (i\geq 1). \nonumber 
\end{align}
We fix $\mu \in \Cplx$ and define convolution matrices $ \tilde{A} _j^{(i)} $ $(i=0,\dots ,r,$ $ j=\delta _{i,0},\dots m_i)$
acting on $V'$ by
\begin{align}
& \left( 
\begin{array}{c}
u^{(0)} _{m_0} \\
\vdots \\
u^{(0)} _{1}  \\
u^{(1)} _{m_1}  \\
\vdots \\
u^{(r)} _{0} 
\end{array}
\right)
= \tilde{A} _j^{(i)} 
\left( 
\begin{array}{c}
v^{(0)} _{m_0} \\
\vdots \\
v^{(0)} _{1}  \\
v^{(1)} _{m_1} \\
\vdots \\
 v^{(r)} _{0}
\end{array}
\right) ,
\end{align}
where $v^{(i')} _{j'} , u^{(i')} _{j'} \in V$ $(i'=0,\dots ,r,$ $ j'=\delta _{i',0},\dots m_{i'})$ and $u^{(i')} _{j'} $ are given by 
\begin{align}
u^{(i')} _{j'}  = \left\{ 
\begin{array}{cl} 
\mu v ^{(i')} _{j'  -j } & i'=i, \; j' > j ,\\
\displaystyle \sum _{i'' =0} ^{r} \sum _{j''=\delta _{i'',0}}^{m_{i''}} A^{(i'')} _{j''} v ^{(i'')} _{j''} &  i=0, \; i'=i, \; j'=j ,\\
\displaystyle \mu v ^{(i')} _{0 } + \sum _{i'' =0} ^{r} \sum _{j''=\delta _{i'',0}}^{m_{i''}} A^{(i'')} _{j''} v ^{(i'')} _{j''} &  i\neq 0, i'=i, j'=j, \\
0 &  \mbox{ otherwise}.
\end{array}
\right.
\end{align}
Namely 
\begin{align}
& \tilde{A} _j^{(0)} = \left( 
\begin{array}{c}
\left.
\begin{array}{cccccc}
 & & & \mu I_n & & \\
 & & & & \ddots  & \\
 & & & & & \mu I_n \\ 
\end{array} 
\right\} \scriptstyle{m_0-j}
\qquad \qquad \qquad \qquad  \qquad 
\\
\begin{array}{cccccccccc}
\! \!  A^{(0)}_{m_0} & \cdots & A^{(0)}_{2} & A^{(0)}_{1} & A_{m_1}^{(1)} & \cdots & A_{0}^{(1)} & A_{m_2}^{(2)} & \cdots  & A_{0}^{(r)} \\
& & & & & & & & &\\
& & & & & & & & &\\
& & & & & & & & & 
\end{array}
\end{array}
\! \! \right) , \\
& \begin{array}{l}
\tilde{A} _j^{(i)} = \\
(i\geq 1)
\end{array}
\left( 
\begin{array}{c}
\left. \qquad \qquad \qquad \qquad \qquad 
\begin{array}{c}
 \\
 \\
 \\
\end{array}
\right\} \scriptstyle{m_0 +(m_1 +1) + \dots +(m_{i-1}+1)}
\\
\left.
\begin{array}{ccccc}
 & & \mu I_n & & \\
 & & &  \ddots   & \\
 & & & & \mu I_n\\ 
\end{array} 
\right\} \scriptstyle{m_i-j}
\quad \qquad \;
\\
\begin{array}{ccccccccccc}
\! \!  A^{(0)}_{m_0} & \cdots  & A^{(i-1)}_{\delta _{i,0}} & A_{m_i}^{(i)} & \cdots & A_{1}^{(i)} & A_{0}^{(i)} +\mu I_n & A_{m_{i+1}}^{(i+1)} & \cdots  & A_{0}^{(r)} \\
& & & & & & & & &\\
& & & & & & & & &\\
& & & & & & & & & 
\end{array}
\end{array}
\! \! \right) ,
\label{eq:tAji}
\end{align}
where $I_n$ is the unit matrix of size $n$. 
We denote the tuple of convolution matrices by $\tilde{\bold A} = (\tilde{A}^{(0)} _{m_0}, \dots , \tilde{A}^{(0)} _{1}, $ $ \tilde{A}^{(1)} _{m_1} , \dots ,\tilde{A}^{(r)} _{0})$.
Note that the definition of convolution matrices is a straightforward generalization of Dettweiler and Reiter \cite{DR1}.
Then we can show that the convolution corresponds to Euler's integral transformation on linear differential equations, which is also analogous to Dettweiler and Reiter \cite{DR2}.
\begin{theorem} 
\label{thm:DRintegrepr}
Assume that $Y= \left( \begin{array}{c} y_{1} (z) \\ \vdots \\ y_{n} (z) \end{array} \right)$ is a solution of
\begin{align}
\frac{dY}{dz} =\left( -\sum _{j=1}^{m_0} A_j^{(0)} z^{j-1} + \sum _{i=1}^r \sum _{j=0}^{m_i} \frac{A_j^{(i)}}{(z-t_i)^{j+1}} \right) Y.
\label{eq:DEirr}
\end{align}
Let $\gamma $ be a cycle such that $\int _{\gamma } \frac{d}{dw} ( r(w) y_{j} (w) (z-w)^{\mu } ) dw =0 $ for any $j$ and any rational function $r(w)$. 
Then the function $U$ defined by
\begin{align}
& U = \left(
\begin{array}{l}
U^{(0)} _{m_0} (z) \\
\vdots  \\
U^{(0)} _{1} (z) \\
U^{(1)} _{m_1} (z) \\
\vdots  \\
U^{(r)} _{0} (z) 
\end{array}
\right) , \;
\begin{array}{l}
U^{(0)} _{j} (z)  = - 
\end{array}
\left(
\begin{array}{c}
\int _{\gamma } w^{j-1} y_{1} (w) (z-w)^{\mu } dw \\
\vdots  \\
\int _{\gamma } w^{j-1} y_{n} (w) (z-w)^{\mu } dw 
\end{array}
\right) ,  \label{eq:integrepU} \\
& 
\begin{array}{l}
U^{(i)} _{j} (z)  =  \\
{(i\neq 0)}
\end{array}
\left(
\begin{array}{c}
\int _{\gamma } (w-t_i)^{-j-1} y_{1} (w) (z-w)^{\mu } dw \\
\vdots  \\
\int _{\gamma } (w-t_i)^{-j-1} y_{n} (w) (z-w)^{\mu } dw 
\end{array}
\right)  
\end{align}
satisfies 
\begin{equation}
\frac{dU}{dz} =\left( -\sum _{j=1}^{m_0} \tilde{A}_j^{(0)} z^{j-1} + \sum _{i=1}^r \sum _{j=0}^{m_i} \frac{\tilde{A}_j^{(i)}}{(z-t_i)^{j+1}} \right) U.
\label{eq:dYdztAzY0}
\end{equation}
\end{theorem}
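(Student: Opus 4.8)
The plan is to verify (\ref{eq:dYdztAzY0}) one block at a time, transforming the left-hand side into the right-hand side by integration by parts. Read the integrals in (\ref{eq:integrepU}) componentwise, so that $U^{(0)}_j=-\int_\gamma w^{j-1}Y(w)(z-w)^\mu\,dw$ and $U^{(i)}_j=\int_\gamma (w-t_i)^{-j-1}Y(w)(z-w)^\mu\,dw$ for $i\ge 1$. Differentiating under the integral sign (legitimate for the cycles $\gamma$ in question), using $\partial_z(z-w)^\mu=\mu(z-w)^{\mu-1}$ and writing $(z-w)^{\mu-1}=(z-w)^\mu/(z-w)$, gives
\[
\frac{d}{dz}U^{(0)}_j=-\mu\int_\gamma\frac{w^{j-1}}{z-w}\,Y(w)(z-w)^\mu\,dw,\qquad
\frac{d}{dz}U^{(i)}_j=\mu\int_\gamma\frac{Y(w)(z-w)^\mu}{(w-t_i)^{j+1}(z-w)}\,dw .
\]
It thus suffices to re-expand, inside the integrals, the rational function of $w$ multiplying $Y(w)(z-w)^\mu$ as rational functions of $z$ times the functions $w^{k-1}$ $(1\le k\le m_0)$ and $(w-t_i)^{-k-1}$ $(0\le k\le m_i)$, up to a single remainder.

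The algebraic input consists of the two elementary identities, both easy consequences of the factorization $b^{p}-a^{p}=(b-a)\sum_{l=1}^{p}a^{p-l}b^{l-1}$,
\[
\frac{w^{j-1}}{z-w}=\frac{z^{j-1}}{z-w}-\sum_{l=1}^{j-1}z^{l-1}w^{j-1-l},\qquad
\frac{1}{(w-t_i)^{j+1}(z-w)}=\sum_{l=0}^{j}\frac{1}{(z-t_i)^{l+1}(w-t_i)^{j-l+1}}+\frac{1}{(z-t_i)^{j+1}(z-w)} .
\]
Multiplying by $Y(w)(z-w)^\mu$ and integrating over $\gamma$, the finite sums turn $\frac{d}{dz}U^{(0)}_j$ into $-\mu\sum_{l=1}^{j-1}z^{l-1}U^{(0)}_{j-l}$ and $\frac{d}{dz}U^{(i)}_j$ into $\mu\sum_{l=0}^{j}(z-t_i)^{-l-1}U^{(i)}_{j-l}$, the $l=j$ term of the latter being $\mu(z-t_i)^{-j-1}U^{(i)}_0$; here one checks that every shifted index $j-l$ stays in $\{1,\dots ,m_0\}$, resp.\ $\{0,\dots ,m_i\}$. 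What is left over is $-\mu z^{j-1}\int_\gamma Y(w)(z-w)^{\mu-1}dw$, resp.\ $\mu(z-t_i)^{-j-1}\int_\gamma Y(w)(z-w)^{\mu-1}dw$.

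The remainder is treated by one further integration by parts, with the constant rational factor $r(w)\equiv 1$: the hypothesis on $\gamma$ gives $\int_\gamma\frac{d}{dw}\big(Y(w)(z-w)^\mu\big)dw=0$, whence $\mu\int_\gamma Y(w)(z-w)^{\mu-1}dw=\int_\gamma Y'(w)(z-w)^\mu dw$. Substituting (\ref{eq:DEirr}) for $Y'$ and invoking the integral representations once more gives
\[
\mu\int_\gamma Y(w)(z-w)^{\mu-1}dw=\sum_{i'=0}^{r}\ \sum_{j'=\delta_{i',0}}^{m_{i'}}A^{(i')}_{j'}\,U^{(i')}_{j'},
\]
the sign of the $A^{(0)}_{j}w^{j-1}$ part of (\ref{eq:DEirr}) being exactly compensated by the minus sign in the definition of $U^{(0)}_j$. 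Combining, one obtains
\[
\frac{d}{dz}U^{(0)}_j=-\mu\sum_{l=1}^{j-1}z^{l-1}U^{(0)}_{j-l}-z^{j-1}\sum_{i',j'}A^{(i')}_{j'}U^{(i')}_{j'},\qquad
\frac{d}{dz}U^{(i)}_j=\mu\sum_{l=0}^{j}\frac{U^{(i)}_{j-l}}{(z-t_i)^{l+1}}+\frac{1}{(z-t_i)^{j+1}}\sum_{i',j'}A^{(i')}_{j'}U^{(i')}_{j'} ,
\]
where the unlabeled sums run over the same ranges as above.

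It remains only to recognise the right-hand sides of the last display as the $V^{(0)}$- and $V^{(i)}$-components of $\big(-\sum_j\tilde{A}^{(0)}_j z^{j-1}+\sum_{i,j}\tilde{A}^{(i)}_j(z-t_i)^{-j-1}\big)U$, which is immediate from the block form of the convolution matrices (see (\ref{eq:tAji})): each $\tilde{A}^{(i)}_j$ maps $V'$ into its $V^{(i)}$-summand, carries the blocks $\mu I_n$ on the diagonal shifted by $j$ (producing the terms $\mu U^{(i)}_{j-l}$), and has its source row equal to the list of the $A^{(i')}_{j'}$ with the single modification $A^{(i)}_0\mapsto A^{(i)}_0+\mu I_n$ when $i\ge 1$; moreover the extra $\mu I_n$ in $A^{(i)}_0+\mu I_n$ is precisely the $l=j$ contribution $\mu U^{(i)}_0$ coming from the partial-fraction sum. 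This proves (\ref{eq:dYdztAzY0}). All steps above are computationally routine; what needs the most care is bookkeeping — tracking the two sources of signs (the minus in $U^{(0)}_j$ and the $-\sum_j A^{(0)}_j w^{j-1}$ in (\ref{eq:DEirr})), checking admissibility of all shifted indices, and pairing the off-diagonal $\mu I_n$ with the correct term of the geometric expansion — rather than any conceptual or analytic difficulty; the sole genuinely analytic ingredient is the cycle condition, used each time to discard a total-derivative integral, together with the harmless interchange of $d/dz$ and $\int_\gamma$.
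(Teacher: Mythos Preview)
The paper does not actually prove this theorem: immediately after the statement it says ``The proof will be given in our forthcoming paper \cite{TakMI}.'' So there is nothing in the present paper to compare your argument against.

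That said, your proof is correct. The two partial-fraction identities are verified by the telescoping factorisation you indicate; the index ranges $j-l\in\{1,\dots,m_0\}$ and $j-l\in\{0,\dots,m_i\}$ are respected; the sign cancellation between the $-\sum_j A^{(0)}_j w^{j-1}$ term of (\ref{eq:DEirr}) and the minus in the definition of $U^{(0)}_j$ is handled properly; and the extra $\mu U^{(i)}_0$ from the $l=j$ term of the geometric sum is exactly what produces the $+\mu I_n$ in the $A^{(i)}_0+\mu I_n$ block of $\tilde{A}^{(i)}_j$. This direct computation---differentiate under the integral, expand the rational kernel, and eliminate the remaining $(z-w)^{\mu-1}$ integral via the cycle condition and the original equation---is the standard Dettweiler--Reiter style argument and is almost certainly what the deferred proof in \cite{TakMI} does as well.
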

The proof will be given in our forthcoming paper \cite{TakMI}.

Let $\gamma _{t}$ is a cycle turning the point $w=t$ anti-clockwise.
Then the contour $[\gamma _{z} ,\gamma _{t_i}] = \gamma _{z} \gamma _{t_i} \gamma _{z} ^{-1} \gamma _{t_i} ^{-1}$ for $i=0, \dots , r$ $(t_0=\infty )$ satisfies the condition of Theorem \ref{thm:DRintegrepr}.
We should also consider cycles which reflect the Stokes phenomena if there exists an irregular singularity. 

\section{Middle convolution and the index of rigidity} \label{sec:mc}

\subsection{Middle convolution}
We have defined convolution in section \ref{sec:conv}, although convolution may not preserve irreducibility.
We now consider a quotient of convolution.
We define the subspaces $\mathcal K$, $\mathcal L '(\mu )$ and $\mathcal L (\mu )$ of $V' = V^{\oplus M}$ by
\begin{align}
& 
\begin{array}{l}
{\mathcal K}^{(i)} =\\ 
(i \geq 1)
\end{array}
\left\{ \left( \! \! 
\begin{array}{c}
v ^{(i)} _{m_i} \\
v ^{(i)} _{m_i -1} \\
\vdots \\
v ^{(i)} _{0} 
\end{array} \! \! 
\right) \in V^{(i)} \: \vline \:
\left( 
\begin{array}{cccc}
A ^{(i)} _{m_i} & A ^{(i)} _{m_i -1}& \dots & A ^{(i)} _{0}  \\
0 & A ^{(i)} _{m_i }& \dots & A ^{(i)} _{1}  \\
0 & 0 & \ddots & \vdots \\
0 & \cdots  & 0 & A ^{(i)} _{m_i } 
\end{array}
\right) 
\left( \! \! 
\begin{array}{c}
v ^{(i)} _{m_i} \\
v ^{(i)} _{m_i -1} \\
\vdots \\
v ^{(i)} _{0} 
\end{array} \! \! 
\right) 
 =\left( 
\begin{array}{c}
0 \\
0 \\
\vdots \\
0
\end{array}
\right) 
\right\} , \\
& {\mathcal K}^{(0)} = \{0 \} (\subset V^{(0)}), \quad   {\mathcal K}= \bigoplus _{i=0} ^{r} {\mathcal K}^{(i)} , \nonumber \\
& {\mathcal L} ' (\mu ) = 
\left\{ \left( 
\begin{array}{c}
v ^{(0)} _{m_0} \\
\vdots \\
v ^{(0)} _{1} \\
v ^{(1)} _{m_1} \\
\vdots \\
v ^{(r)} _{0} 
\end{array}
\right) \: \vline \: 
\begin{array}{l}
\quad v^{(i)} _{j} =0 ,\quad (i \neq 0, j \neq 0), \\
\quad  v^{(1)} _{0}  =\dots  = v^{(r)} _{0} =-\ell , \\
 \left( 
\begin{array}{cccc}
A ^{(0)} _{m_0} &  \dots & A ^{(0)} _{1} & A ^{(0)} _{0} -\mu I_n \\
0 & A ^{(0)} _{m_0 }&  \dots & A ^{(0)} _{1}  \\
0 & 0 & \ddots & \vdots \\
0 & \cdots  & 0 & A ^{(0)} _{m_0 } 
\end{array}
\right) 
\left( 
\begin{array}{c}
v ^{(0)} _{m_0} \\
\vdots \\
v ^{(0)} _{1} \\
\ell
\end{array}
\right) 
 =\left( 
\begin{array}{c}
0 \\
\vdots \\
0 \\
0
\end{array}
\right) 
\end{array}
\right\} , \nonumber \\
& 
\begin{array}{l}
\\
{\mathcal L} (\mu ) = {\mathcal L} ' (\mu ) , \\
 (\mu \neq 0) 
\end{array} \quad 
 {\mathcal L} (0) = \left\{ \left( 
\begin{array}{c}
v ^{(0)} _{m_0} \\
\vdots \\
v ^{(0)} _{1} \\
v ^{(1)} _{m_1} \\
\vdots \\
v ^{(r)} _{0} 
\end{array}
\right) \: \vline \: \sum _{i =0} ^{r} \sum _{j=\delta _{i,0}}^{m_{i}} A^{(i)} _{j} v ^{(i)} _{j} =0 \right\} ,
\nonumber 
\end{align}
where $A_0^{(0)} =-(A^{(1)}_0 + \dots + A^{(r)}_0)$.
\begin{proposition}
We have $\tilde{A}^{(i)}_j {\mathcal K} \subset {\mathcal K} $, $\tilde{A}^{(i)}_j {\mathcal L} (\mu ) \subset {\mathcal L} (\mu ) $ and $\tilde{A}^{(i)}_j {\mathcal L} '(\mu ) \subset {\mathcal L} '(\mu ) $ for all $i,j$.
\end{proposition}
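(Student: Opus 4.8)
The plan is to verify the three inclusions directly from the explicit entries of the convolution matrices $\tilde{A}^{(i)}_j$, organizing the computation around two structural facts. First, the image of $\tilde{A}^{(i)}_j$ lies in the single summand $V^{(i)}$ of $V'$ (only the block of rows indexed by $i'=i$ is nonzero), so it is enough to control the $V^{(i)}$-component of the output. Second, for $i\geq 1$ the $V^{(i)}$-component of $\tilde{A}^{(i)}_j v$ is, up to the scalar $\mu$ and a single correction in the slot $j'=j$, the ``shift'' whose $j'$-th slot equals $v^{(i)}_{j'-j}$ (read as $0$ when $j'<j$); the point of isolating this is that such a shift carries the $k$-th row of each Toeplitz-type matrix used to define $\mathcal{K}^{(i)}$ and $\mathcal{L}'(\mu)$ to its $(k+j)$-th row, which vanishes as well (or is an empty sum). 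Before touching $\mathcal{K}$ and $\mathcal{L}'(\mu)$ I would dispose of $\mathcal{L}(0)$: when $\mu=0$ every entry of every $\tilde{A}^{(i)}_j$ is either $0$ or the linear form $\sum_{i',j'}A^{(i')}_{j'}v^{(i')}_{j'}$, so $\mathcal{L}(0)$ lies in the common kernel of all the $\tilde{A}^{(i)}_j$ and is invariant trivially; since $\mathcal{L}(\mu)=\mathcal{L}'(\mu)$ for $\mu\neq 0$, only $\mathcal{K}$ and $\mathcal{L}'(\mu)$ then remain.

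For $\mathcal{K}$ I would use that $v\in\mathcal{K}$ has all $V^{(0)}$-coordinates zero and that the top row of the defining matrix of $\mathcal{K}^{(i)}$ gives $\sum_j A^{(i)}_j v^{(i)}_j=0$ for each $i\geq 1$, hence $\sum_{i',j'}A^{(i')}_{j'}v^{(i')}_{j'}=0$. This forces $\tilde{A}^{(0)}_j v=0$, and for $i\geq 1$ it makes the $j'=j$ slot of the output equal to $\mu v^{(i)}_0$, so the $V^{(i)}$-component of $\tilde{A}^{(i)}_j v$ is $\mu$ times the shift of $(v^{(i)}_{m_i},\dots,v^{(i)}_0)$ described above; by the shift remark this lies in $\mathcal{K}^{(i)}$, and the other blocks are $0$, so $\tilde{A}^{(i)}_j\mathcal{K}\subset\mathcal{K}$.

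For $\mathcal{L}'(\mu)$ the first step is to show that on $\mathcal{L}'(\mu)$ one has $\sum_{i',j'}A^{(i')}_{j'}v^{(i')}_{j'}=\mu\ell$ with $-\ell=v^{(1)}_0=\dots=v^{(r)}_0$: the $i'\geq 1$ terms collapse to $A^{(0)}_0\ell$ by $v^{(i')}_0=-\ell$ and $A^{(0)}_0=-(A^{(1)}_0+\dots+A^{(r)}_0)$, and the top row of the matrix defining $\mathcal{L}'(\mu)$ then reads $\sum_{j'=1}^{m_0}A^{(0)}_{j'}v^{(0)}_{j'}+A^{(0)}_0\ell=\mu\ell$. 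Granting this, for $i\geq 1$ all components of $\tilde{A}^{(i)}_j v$ vanish (the shifted slots because $v^{(i)}_{j'}=0$ for $j'\geq 1$, the $j'=j$ slot because it equals $\mu v^{(i)}_0+\mu\ell=0$), so $\tilde{A}^{(i)}_j$ kills $\mathcal{L}'(\mu)$. For $i=0$ the output sits in $V^{(0)}$, with $j'=j$ slot equal to $\mu\ell$ and $j'>j$ slots equal to $\mu v^{(0)}_{j'-j}$; regarding it as an element of the family defining $\mathcal{L}'(\mu)$ with auxiliary vector $\ell'=0$, the defining conditions for it become $\mu$ times the $j$-th, respectively $(j+k)$-th, row of the Toeplitz matrix applied to $(v^{(0)}_{m_0},\dots,v^{(0)}_1,\ell)$, all of which vanish since $v\in\mathcal{L}'(\mu)$; hence $\tilde{A}^{(i)}_j\mathcal{L}'(\mu)\subset\mathcal{L}'(\mu)$.

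I expect the only real obstacle to be the index bookkeeping in the $\mathcal{L}'(\mu)$ case: aligning correctly the shifted output slots with the rows of the $(m_0+1)\times(m_0+1)$ Toeplitz matrix, carrying along the modified corner $A^{(0)}_0-\mu I_n$ (which, somewhat unexpectedly, is used to derive $\sum A^{(i')}_{j'}v^{(i')}_{j'}=\mu\ell$ rather than in checking $u\in\mathcal{L}'(\mu)$) and the auxiliary vector $\ell$ (which is not itself a coordinate of $V^{(0)}$), and recognizing that for $i\geq 1$ the matrix $\tilde{A}^{(i)}_j$ annihilates $\mathcal{L}'(\mu)$ rather than merely stabilizing it. Once the shift principle is isolated, every step reduces to reindexing a finite sum, in close parallel with the Fuchsian computation of Dettweiler and Reiter.
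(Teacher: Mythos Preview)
Your argument is correct and follows essentially the same route as the paper: the paper writes out in full only the case $\tilde{A}^{(i)}_j\mathcal{L}'(\mu)\subset\mathcal{L}'(\mu)$ (computing $\sum A^{(i')}_{j'}v^{(i')}_{j'}=\mu\ell$, then showing $\tilde{A}^{(0)}_j v$ is $\mu$ times the shifted vector and $\tilde{A}^{(i)}_j v=0$ for $i\geq 1$) and relegates $\mathcal{K}$ and $\mathcal{L}(0)$ to ``shown similarly''. Your explicit treatment of those cases, and your isolation of the shift principle for the Toeplitz conditions, simply make that ``similarly'' precise.
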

\begin{proof}
We show that $\tilde{A}^{(i)}_j {\mathcal L} '(\mu ) \subset {\mathcal L} '(\mu ) $.
Assume that  $v=(v^{(0)} _{m_0} \: \dots \: v^{(0)} _{1}\: v^{(1)} _{m_1} \: $ $ \dots \: v^{(r)} _{0} )^{\bold T} \in {\mathcal L}' (\mu )$.
Then  $v^{(i)} _{j} =0 $ $(i \neq 0, j \neq 0)$, $v^{(1)} _{0}  =\dots = v^{(r)} _{0} =-\ell $ and $( v^{(0)} _{m_i} \: \dots \: v^{(0)} _{1} \: \ell)^{\bold T}$ satisfies the definition of ${\mathcal L} '(\mu ) $.
In particular we have
\begin{align}
& \sum _{i '=0} ^{r} \sum _{j'=\delta _{i',0}}^{m_{i'}} A^{(i')} _{j'} v ^{(i')} _{j'} = \sum _{j'=1}^{m_0} A^{(0)} _{j'} v ^{(0)} _{j'} - \sum _{i =1} ^{r} A^{(i)} _{0} \ell \\
& = \sum _{j'=1}^{m_0} A^{(0)} _{j'} v ^{(0)} _{j'} + A^{(0)} _{0} \ell = \mu \ell . \nonumber
\end{align}
Then $\tilde{A}^{(0)}_j v $ $(j\neq 0)$ is written as
\begin{align}
\tilde{A}^{(0)}_j v = \left( 
\begin{array}{c}
\mu v ^{(0)} _{m_0-j} \\
\vdots \\
\mu v ^{(0)} _{1} \\
\sum _{j=1}^{m_0} A^{(0)} _{j} v ^{(0)} _{j} + \sum _{i =1} ^{r} A^{(i)} _{0} v ^{(i)} _{0} \\
0 \\
\vdots 
\end{array}
\right) 
 =\mu \left( 
\begin{array}{c}
v ^{(0)} _{m_0-j} \\
\vdots \\
 v ^{(0)} _{1} \\
\ell \\
0 \\
\vdots 
\end{array}
\right) ,
\end{align}
and 
$( v^{(0)} _{m_0-j} \: \dots \: v^{(0)} _{1} \:  \: \ell \; 0 \dots )^{\bold T}$ satisfies the definition of ${\mathcal L} '(\mu ) $.
Hence $\tilde{A}^{(0)}_j v \in {\mathcal L} '(\mu )$ for $j=1, \dots ,m_0$.
We also have $\tilde{A}^{(i)}_j v =0 $ for $i \neq 0$, because $ v^{(i)} _{m_i} =\dots =v^{(i)} _{1}=0$ and $\mu v^{(i)} _{0} + \sum _{i '=0} ^{r} \sum _{j'=\delta _{i',0}}^{m_{i'}} A^{(i')} _{j'} v ^{(i')} _{j'}  = -\mu \ell + \mu \ell =0$.  

The other cases are shown similarly.
\end{proof}
We define $mc_{\mu } (V)$ to be the $\langle mc_{\mu } (\bold A ) \rangle $-module $V^{\oplus M} /({\mathcal K} +{\mathcal L} (\mu ))$ where $mc_{\mu } (\bold A )$ is the tuple of matrices on $V^{\oplus M} /({\mathcal K} +{\mathcal L} (\mu ))$ whose actions are determined by $\tilde{\bold A}$, and we call it the middle convolution of $V$ with the parameter $\mu $.
The following propositions are analogues to Dettweiler and Reiter \cite{DR1}, which will be shown in our forthcoming paper \cite{TakMI}.
\begin{proposition} \label{prop:relKL} 
(i) If $\mu \neq 0$, then ${\mathcal K} \cap {\mathcal L} (\mu ) =\{0 \}$.\\
(ii) If $\mu =0$, then ${\mathcal K} + {\mathcal L} '(0)  \subset {\mathcal L} (0)$.\\
(iii) If the $\langle \bold A \rangle $-module $V$ is irreducible and $\mu =0$, then ${\mathcal K} \cap {\mathcal L} ' (0) =\{0 \}$ and $\dim {\mathcal K} + \dim {\mathcal L} ' (0)  \leq n ( M-1)$.
\end{proposition}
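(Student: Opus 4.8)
The plan is to treat the three parts in order; (i) and (ii) are direct manipulations of the defining relations of $\mathcal K$, $\mathcal L(\mu)$ and $\mathcal L'(\mu)$, while (iii) uses irreducibility in an essential way. For (i), let $v=(v^{(0)}_{m_0},\dots,v^{(r)}_0)^{\mathbf T}\in\mathcal K\cap\mathcal L(\mu)$ with $\mu\neq 0$, so $\mathcal L(\mu)=\mathcal L'(\mu)$. Membership in $\mathcal L'(\mu)$ forces $v^{(i)}_j=0$ for $i\neq0,\ j\neq0$ and $v^{(1)}_0=\dots=v^{(r)}_0=-\ell$; membership in $\mathcal K$ forces the $V^{(0)}$-block to vanish (as $\mathcal K^{(0)}=\{0\}$) and forces each $V^{(i)}$-block $(0,\dots,0,-\ell)^{\mathbf T}$ into $\mathcal K^{(i)}$. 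Reading the rows of the Toeplitz matrix defining $\mathcal K^{(i)}$ at $(0,\dots,0,-\ell)^{\mathbf T}$ gives $A^{(i)}_j\ell=0$ for all $i\geq1$, $0\le j\le m_i$, and reading the rows of the matrix defining $\mathcal L'(\mu)$ at $(0,\dots,0,\ell)^{\mathbf T}$ gives $A^{(0)}_j\ell=0$ for $1\le j\le m_0$ together with $(A^{(0)}_0-\mu I_n)\ell=0$. Since $A^{(0)}_0=-(A^{(1)}_0+\dots+A^{(r)}_0)$ kills $\ell$, we get $\mu\ell=0$, hence $\ell=0$ and $v=0$.

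For (ii), with $\mu=0$ I would check $\mathcal K\subset\mathcal L(0)$ and $\mathcal L'(0)\subset\mathcal L(0)$ separately: if $v\in\mathcal K$ then $v^{(0)}_j=0$ for all $j$ and, for each $i\geq1$, the top row of the relation defining $\mathcal K^{(i)}$ reads $\sum_{j=0}^{m_i}A^{(i)}_jv^{(i)}_j=0$, so summing over $i$ yields $\sum_{i,j}A^{(i)}_jv^{(i)}_j=0$; and if $v\in\mathcal L'(0)$ then the computation already performed in the proof of the preceding proposition, specialized to $\mu=0$, gives $\sum_{i',j'}A^{(i')}_{j'}v^{(i')}_{j'}=\mu\ell=0$. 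In both cases $v\in\mathcal L(0)$.

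For (iii), the identity $\mathcal K\cap\mathcal L'(0)=\{0\}$ is obtained exactly as in (i) with $\mu=0$: a vector in the intersection now yields $A^{(i)}_j\ell=0$ for \emph{all} $i=0,\dots,r$ (including $(i,j)=(0,0)$) and all admissible $j$, so $\Cplx\ell$ is an $\langle\mathbf A\rangle$-submodule of $V$ annihilated by every $A^{(i)}_j$; irreducibility then forces $\ell=0$. For the dimension bound, introduce the linear map
\[
\tau\colon V^{\oplus M}\to V,\qquad \tau(v)=\sum_{i=0}^{r}\sum_{j=\delta_{i,0}}^{m_i}A^{(i)}_jv^{(i)}_j ,
\]
whose kernel is, by definition, $\mathcal L(0)$. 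Its image is $\sum_{i,j}\mathrm{im}\,A^{(i)}_j$ (the image of $A^{(0)}_0=-(A^{(1)}_0+\dots+A^{(r)}_0)$ being already contained in it), which is an $\langle\mathbf A\rangle$-invariant subspace of $V$; since $V$ is irreducible and nonzero, it equals $V$, so $\tau$ is onto and $\dim\mathcal L(0)=nM-n=n(M-1)$. Combining this with $\mathcal K+\mathcal L'(0)\subseteq\mathcal L(0)$ from (ii) and with $\mathcal K\cap\mathcal L'(0)=\{0\}$ gives $\dim\mathcal K+\dim\mathcal L'(0)=\dim(\mathcal K+\mathcal L'(0))\le\dim\mathcal L(0)=n(M-1)$.

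Almost everything here is bookkeeping with the triangular Toeplitz-type matrices, where the only thing demanding care is keeping the index ranges $j=\delta_{i,0},\dots,m_i$ and the convention for $A^{(0)}_0$ consistent. The one genuinely substantive point — and hence the main obstacle — is the surjectivity of $\tau$ in (iii): one must verify that $\mathrm{im}\,\tau=\sum_{i,j}\mathrm{im}\,A^{(i)}_j$ is stable under every $A^{(k)}_l$ (immediate from $A^{(k)}_l\,\mathrm{im}\,A^{(i)}_j\subseteq\mathrm{im}\,A^{(k)}_l$) in order to apply irreducibility. This is the step where the hypothesis on $V$ truly enters, apart from the submodule argument used for $\mathcal K\cap\mathcal L'(0)=\{0\}$.
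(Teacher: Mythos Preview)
The paper does not prove Proposition~\ref{prop:relKL}; it explicitly defers the proof to the forthcoming paper \cite{TakMI}, so there is no argument in the present paper to compare your proposal against. That said, your proof is essentially correct and is the natural one. The computations in (i) and (ii) are exactly the expected bookkeeping with the block-triangular Toeplitz relations, and your use in (iii) of the map $\tau(v)=\sum_{i,j}A^{(i)}_j v^{(i)}_j$ to identify $\mathcal L(0)$ as a kernel and then invoke irreducibility to get surjectivity is the right idea.

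One small caveat worth making explicit: in (iii) you conclude from irreducibility that $\ell=0$ (for the intersection statement) and that $\operatorname{im}\tau=V$ (for the dimension bound), but irreducibility alone only gives $\Cplx\ell\in\{0,V\}$ and $\operatorname{im}\tau\in\{0,V\}$. The residual case is $n=1$ with every $A^{(i)}_j=0$, where in fact $\mathcal K\cap\mathcal L'(0)$ is one-dimensional and the dimension inequality fails; so the proposition itself tacitly excludes this trivial tuple. You should state this nondegeneracy hypothesis (``not all $A^{(i)}_j$ vanish'') when you invoke irreducibility, rather than leaving it implicit.
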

\begin{proposition}
Assume that the $\langle \bold A \rangle$-module $V$ is irreducible.\\
(i) $mc _{0} (V) \simeq V$ as $\langle \bold A \rangle$-modules.\\
(ii) The $\langle mc_{\mu } (\bold A ) \rangle $-module $mc _{\mu } (V)$ is irreducible and  $V \simeq mc _{- \mu } (mc _{\mu } (V))$ for any $\mu $.
\end{proposition}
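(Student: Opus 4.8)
The plan is to adapt the arguments of Dettweiler and Reiter \cite{DR1,DR2} to the block structure produced by the irregular singularities; throughout we assume $\mathbf A\ne 0$. For (i), Proposition \ref{prop:relKL}(ii) gives $\mathcal K\subseteq\mathcal L(0)$, so $mc_0(V)=V^{\oplus M}/\mathcal L(0)$. Consider the linear map
\[ \Phi\colon V^{\oplus M}\longrightarrow V,\qquad (v^{(i)}_j)_{i,j}\longmapsto \sum_{i=0}^{r}\sum_{j=\delta_{i,0}}^{m_i}A^{(i)}_jv^{(i)}_j, \]
whose kernel is, by definition, $\mathcal L(0)$. Specializing the convolution matrices to $\mu=0$, one sees at once that $\tilde A^{(i)}_jv$ has all of its blocks equal to $0$ except the $(i,j)$-th, which is $\Phi(v)$; hence $\Phi(\tilde A^{(i)}_jv)=A^{(i)}_j\Phi(v)$, so $\Phi$ is a morphism from the $\langle\tilde{\mathbf A}\rangle$-module $V^{\oplus M}$ (with $\mu=0$) to the $\langle\mathbf A\rangle$-module $V$. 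Since $\mathbf A\ne 0$, the image of $\Phi$ is a nonzero $\langle\mathbf A\rangle$-submodule of $V$, hence equals $V$ by irreducibility, and $\Phi$ therefore induces an isomorphism $mc_0(V)\cong V$ taking $mc_0(\mathbf A)$ to $\mathbf A$. This gives (i); applying (i) once more to the irreducible module $mc_0(V)$ also gives the case $\mu=0$ of (ii).

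Now fix $\mu\ne 0$. The core of (ii) is a lemma on the lattice of $\langle\tilde{\mathbf A}\rangle$-submodules of $V^{\oplus M}$, analogous to the Fuchsian case: if $V$ is irreducible, then $\mathcal K+\mathcal L(\mu)$ is a maximal $\langle\tilde{\mathbf A}\rangle$-submodule of $V^{\oplus M}$; since moreover $\mathcal K\cap\mathcal L(\mu)=\{0\}$ by Proposition \ref{prop:relKL}(i), the quotient $mc_\mu(V)=V^{\oplus M}/(\mathcal K+\mathcal L(\mu))$ is then a nonzero irreducible $\langle mc_\mu(\mathbf A)\rangle$-module. To prove the lemma I would begin with two structural observations, each resting on $\mu\ne 0$, on the irreducibility of $V$ and on the surjectivity of $\Phi$: the largest submodule with vanishing action, $\bigcap_{i,j}\ker\tilde A^{(i)}_j$, is contained in $\mathcal L(\mu)$; and dually $\sum_{i,j}\operatorname{im}\tilde A^{(i)}_j=V^{\oplus M}$. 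Then, given a submodule $\mathcal U\supsetneq\mathcal K+\mathcal L(\mu)$, pick $v\in\mathcal U$ with $v\notin\mathcal L(\mu)$; some $\tilde A^{(i)}_jv$ is then nonzero, and, since the $(i,j)$-th block-row of $\tilde A^{(i)}_j$ is the full matrix row $(A^{(0)}_{m_0},\dots,A^{(r)}_0)$ altered only by the $\mu I_n$ ladder, iterating the $\tilde A^{(i)}_j$ and exploiting the ladder together with irreducibility of $V$ should force a whole summand $V^{(i)}$, and then, block by block, all of $V^{\oplus M}$, into $\mathcal U$. I expect this submodule lemma to be the main obstacle: in \cite{DR1} each convolution matrix has a single nonzero block-row, whereas here the block-triangular $\mu I_n$ parts at $z=\infty$ and at each $z=t_i$ couple several coordinate blocks at once --- for instance $\bigcap_{i,j}\ker\tilde A^{(i)}_j$ is $\ker A^{(0)}_{m_0}$ placed in the top block, a submodule lying in $\mathcal L(\mu)$ but meeting $\mathcal K$ only in $0$, so the clean dichotomy of \cite{DR1} (every submodule is inside $\mathcal K$ or contains $\mathcal L(\mu)$) has no literal analogue --- and one must keep careful track of how a nonzero vector in one block propagates under the $\tilde A^{(i)}_j$ before irreducibility of $V$ can be applied.

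Finally, for $V\simeq mc_{-\mu}(mc_\mu(V))$: middle convolution changes neither the positions of the singular points nor the orders $m_0,\dots,m_r$, so the integer $M$ is the same for $W:=mc_\mu(V)$, and by the irreducibility just established $mc_{-\mu}(W)$ is again irreducible. It therefore suffices to produce one nonzero $\langle\mathbf A\rangle$-linear map $V\to mc_{-\mu}(mc_\mu(V))$, for a nonzero morphism between irreducible modules is an isomorphism. I would write such a map down explicitly, following \cite{DR1}: viewing $W^{\oplus M}$ as a quotient of $V^{\oplus M^2}$, the $\mu I_n$ blocks of $\tilde{\mathbf A}$ and the $-\mu I_n$ blocks of the twice-convolved matrices single out a canonical copy of $V$ inside $V^{\oplus M^2}$; one then checks that it descends modulo $\mathcal K+\mathcal L(\mu)$ at the first stage and modulo the corresponding subspaces $\mathcal K_W+\mathcal L_W(-\mu)$ at the second, and that it intertwines $\mathbf A$ with $mc_{-\mu}(mc_\mu(\mathbf A))$. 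This last verification is routine linear algebra.
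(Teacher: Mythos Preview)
The paper does not actually prove this proposition: immediately before the statement it says that this and the preceding proposition ``will be shown in our forthcoming paper \cite{TakMI}''. There is therefore no in-paper argument to compare your proposal against; the proposition is stated here without proof.

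On its own merits, your treatment of (i) is correct and essentially complete. With $\mu=0$ each $\tilde A^{(i)}_j$ indeed has a single nonzero block, equal to $\Phi(v)$ in position $(i,j)$, so $\Phi$ intertwines $\tilde{\mathbf A}$ and $\mathbf A$; Proposition~\ref{prop:relKL}(ii) gives $\mathcal K\subset\mathcal L(0)=\ker\Phi$, and surjectivity of $\Phi$ follows from irreducibility as you say. This is the natural argument and presumably the one intended in \cite{TakMI}.

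For (ii) your proposal is an honest outline rather than a proof. The strategy---show that $\mathcal K+\mathcal L(\mu)$ is a maximal $\langle\tilde{\mathbf A}\rangle$-submodule, then exhibit an explicit nonzero intertwiner $V\to mc_{-\mu}(mc_\mu(V))$---is the right one and mirrors \cite{DR1}. Your structural observations are accurate (in particular your computation of $\bigcap_{i,j}\ker\tilde A^{(i)}_j$ and the remark that the clean dichotomy of \cite{DR1} fails here). But the heart of the matter, the maximal-submodule lemma, is left at the level of ``iterating the $\tilde A^{(i)}_j$ \dots\ should force a whole summand $V^{(i)}$ into $\mathcal U$''. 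That step is where the block-triangular $\mu I_n$ ladders genuinely complicate the Fuchsian argument, and it needs to be written out: one has to show, for each $i$, how to isolate a single $V$-summand from a vector outside $\mathcal K+\mathcal L(\mu)$ before irreducibility of $V$ can be invoked. Likewise the intertwiner for $mc_{-\mu}\circ mc_\mu\simeq\mathrm{id}$ is only described in words; writing it down and checking it descends through both quotients is routine but not done. So the approach is sound, and you have correctly located the obstacle, but the proposal as written is a plan, not a proof.
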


\subsection{Addition}

Let $Y$ be a solution of Eq.(\ref{eq:DEirr0}).
Then the function 
\begin{align}
 Y' = \exp  \left( -\sum _{j=1}^{m_0} \frac{\mu ^{(0)} _{j}}{j} z^{j}  -\sum _{i=1}^r \sum _{j=1}^{m_i} \frac{\mu _j^{(i)}}{j(z-t_i)^{j+1}} \right) \prod _{i=1}^r (z-t_i)^{\mu _0^{(i)}} Y
\end{align}
satisfies the equation
\begin{align}
\frac{dY'}{dz} =\left( -\sum _{j=1}^{m_0} (A_j^{(0)} + \mu ^{(0)} _{j} I_n ) z^{j-1} + \sum _{i=1}^r \sum _{j=0}^{m_i} \frac{A_j^{(i)} +  \mu ^{(i)} _{j} I_n}{(z-t_i)^{j+1}} \right) Y'.
\label{eq:DEirrYp}
\end{align}
We now define addition for the tuple ${\bold A}=( A^{(0)} _{m_0}, \dots ,  A^{(0)} _{1}$, $A^{(1)} _{m_1} , \dots ,A^{(r)} _{0} )$ by
\begin{align}
& M_{\overline{\mu } } ({\bold A} ) = {\bold A} + \overline{\mu } I_n = (A^{(0)} _{m_0}+ \mu ^{(0)} _{m_0} I_n , \dots  , A^{(r)} _{0}  \! \! \! + \mu ^{(r)} _{0}I_n  ),
\end{align}
where ${\overline{\mu }} = (\mu ^{(0)} _{m_0}, \dots , \mu ^{(0)} _{1}, \mu ^{(1)} _{m_1} , \dots ,\mu ^{(r)} _{0} ) \in \Cplx ^{M}$.

\subsection{Index of rigidity}
Let 
${\bold A}=(A^{(0)} _{m_0}, \dots , A^{(0)} _{1}, A^{(1)} _{m_1} , \dots ,A^{(r)} _{0} )$ be a tuple of matrices acting on $V$.
Set
\begin{align}
A ^{(i)} =\left( 
\begin{array}{cccc}
A ^{(i)} _{m_i} & A ^{(i)} _{m_i -1}& \dots & A ^{(i)} _{0}  \\
0 & A ^{(i)} _{m_i }& \dots & A ^{(i)} _{1}  \\
0 & 0 & \ddots & \vdots \\
0 & \cdots  & 0 & A ^{(i)} _{m_i } 
\end{array}
\right) {\in {\rm End} ({V ^{\oplus (m_i +1 )})}\atop(i=0,\dots ,r)},
\end{align}
and 
\begin{align}
& 
{\mathcal C}^{(i)} =
\left\{
C^{(i)}  = 
\left.
\left( 
\begin{array}{cccc}
C ^{(i)} _{m_i} & C ^{(i)} _{m_i -1}& \dots & C ^{(i)} _{0}  \\
0 & C ^{(i)} _{m_i }& \dots & C ^{(i)} _{1}  \\
0 & 0 & \ddots & \vdots \\
0 & \cdots  & 0 & C ^{(i)} _{m_i } 
\end{array} 
\right)
\right|
A ^{(i)} C ^{(i)} = C^{(i)} A^{(i)}
\right\} .
\end{align}
We define the index of rigidity by 
\begin{align}
{\rm idx}({\bold A}) = \sum _{i=0}^r \dim ({\mathcal C}^{(i)} ) -\left( M -1 \right) (\dim (V))^2 ,
\end{align}
where $M= r+ \sum _{i=0} ^r  m_i$.
The condition $A ^{(i)} C ^{(i)} = C^{(i)} A^{(i)} $ is equivalent to
\begin{align}
& \sum _{j=0}^k \left( A^{(i)} _{m_i-j} C^{(i)} _{m_i-k+j} -C^{(i)} _{m_i-k+j} A^{(i)} _{m_i-j} \right) =0, \quad k=0,\dots ,m_i.
\label{eq:relrig}
\end{align}
The following proposition is readily obtained by Eq.(\ref{eq:relrig}):
\begin{proposition}
The index of rigidity is preserved by addition, i.e. 
${\rm idx}(M_{\overline{\mu } } ({\bold A} )) = {\rm idx}({\bold A}) $.
\end{proposition}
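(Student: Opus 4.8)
The plan is to show that the linear subspaces $\mathcal{C}^{(i)}$ attached to $\bold A$ and to $M_{\overline{\mu}}(\bold A)$ literally coincide inside ${\rm End}(V^{\oplus(m_i+1)})$. Since addition changes neither $r$, nor the $m_i$, nor $\dim V$, the quantity $(M-1)(\dim V)^2$ with $M = r + \sum_{i=0}^r m_i$ is unaffected, so once the $\dim \mathcal{C}^{(i)}$ are seen to be invariant the equality ${\rm idx}(M_{\overline{\mu}}(\bold A)) = {\rm idx}(\bold A)$ follows at once.

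To see the invariance of $\mathcal{C}^{(i)}$, I would work directly with the reformulation (\ref{eq:relrig}) of the commutation condition $A^{(i)} C^{(i)} = C^{(i)} A^{(i)}$. Addition replaces $A^{(i)}_{m_i-j}$ by $A^{(i)}_{m_i-j} + \mu^{(i)}_{m_i-j} I_n$. Substituting this into the $k$-th relation, the terms involving $\mu^{(i)}_{m_i-j} I_n$ contribute $\sum_{j=0}^k \mu^{(i)}_{m_i-j}\bigl(I_n C^{(i)}_{m_i-k+j} - C^{(i)}_{m_i-k+j} I_n\bigr)$, which vanishes because $I_n$ is central. Hence for every $k = 0,\dots,m_i$ the relation (\ref{eq:relrig}) for $M_{\overline{\mu}}(\bold A)$ is word-for-word the relation for $\bold A$; the two systems of equations in the unknown blocks $(C^{(i)}_{m_i},\dots,C^{(i)}_0)$ are identical, so their solution spaces, and thus the dimensions $\dim \mathcal{C}^{(i)}$, agree.

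The same fact can be seen conceptually: writing $A^{(i)} = \sum_{k=0}^{m_i} A^{(i)}_{m_i-k}\otimes N^k$ with $N$ the $(m_i+1)$-dimensional nilpotent shift matrix, addition sends $A^{(i)}$ to $A^{(i)} + I_n\otimes P(N)$ where $P(N) = \sum_{k=0}^{m_i}\mu^{(i)}_{m_i-k} N^k$. Every element of $\mathcal{C}^{(i)}$ has the same block-Toeplitz shape $\sum_l C^{(i)}_{m_i-l}\otimes N^l$, and $I_n\otimes P(N)$ commutes with all such matrices because powers of $N$ commute with one another; therefore $[A^{(i)} + I_n\otimes P(N),\, C^{(i)}] = [A^{(i)},\, C^{(i)}]$, and the centralizer is unchanged.

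I do not expect any genuine obstacle: the computation is a one-line consequence of the centrality of scalar matrices, and the combinatorial part of the index formula is manifestly untouched by $M_{\overline{\mu}}$. The only point deserving a word of care is to record explicitly that $M_{\overline{\mu}}$ preserves $\dim V$ and the ramification data $r, m_0,\dots,m_r$, so that the subtracted term in ${\rm idx}$ is the same on both sides.
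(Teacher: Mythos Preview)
Your proof is correct and follows essentially the same approach as the paper: the paper simply states that the proposition ``is readily obtained by Eq.(\ref{eq:relrig}),'' and your first paragraph carries out exactly this computation, observing that the scalar shifts $\mu^{(i)}_{m_i-j}I_n$ drop out of the commutator relations. Your additional tensor-product reformulation is a nice alternative packaging of the same fact but is not needed.
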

\begin{conjecture} \label{con:indrig}
If the $\langle \bold A \rangle$-module $V$ is irreducible, then the index of rigidity is preserved by middle convolution, i.e. 
${\rm idx}(mc _{\mu }({\bold A })) = {\rm idx}({\bold A})$.
\end{conjecture}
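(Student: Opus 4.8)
The plan is to reduce the conjecture to a local computation at each singular point, in parallel with the proof of Dettweiler and Reiter in the Fuchsian case \cite{DR1}.
First note that $mc_{\mu}(\bold A)$ is again a tuple indexed by the pairs $(i,j)$ with $i=0,\dots ,r$ and $j=\delta _{i,0},\dots ,m_i$; thus the number $r$ of finite singular points and all the Poincar\'e ranks $m_i$ are the same for $\bold A$ and $mc_{\mu}(\bold A)$, so in particular $M$ is unchanged.
(One should check that the leading matrix $mc_{\mu}(A)^{(i)}_{m_i}$ does not degenerate on the quotient space, so that the Poincar\'e rank at $t_i$ really remains $m_i$; for irreducible $V$ this is expected not to occur.)
Writing $n=\dim V$ and $d=\dim mc_{\mu}(V)$, the conjecture is then equivalent to the identity
\begin{align}
\sum _{i=0}^r \bigl( \dim {\mathcal C}^{(i)}(mc_{\mu}(\bold A)) -\dim {\mathcal C}^{(i)}(\bold A) \bigr) = (M-1)\,(d^2-n^2) .
\label{eq:idxplan}
\end{align}
It is useful to record that, under the block Toeplitz identification, ${\mathcal C}^{(i)}$ is precisely the centralizer of the polynomial matrix $P^{(i)}(x)=\sum _{j=0}^{m_i} A^{(i)}_{m_i-j}x^{j}$ inside ${\rm End}(V)[x]/(x^{m_i+1})$, since the relations (\ref{eq:relrig}) say exactly that $P^{(i)}(x)C^{(i)}(x)\equiv C^{(i)}(x)P^{(i)}(x)\bmod x^{m_i+1}$; hence $\dim {\mathcal C}^{(i)}$ is a formal-equivalence invariant, depending only on the conjugacy class of $P^{(i)}(x)$ under invertible elements of ${\rm End}(V)[x]/(x^{m_i+1})$.

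When $\mu =0$ there is nothing to prove: $mc_{0}(V)\simeq V$ as $\langle \bold A\rangle$-modules, so $d=n$, the tuples $mc_0(\bold A)$ and $\bold A$ are conjugate, and both sides of (\ref{eq:idxplan}) vanish.
Assume $\mu \neq 0$.
By Proposition \ref{prop:relKL}(i) we have ${\mathcal K}\cap {\mathcal L}(\mu)=\{0\}$, whence
\begin{align}
d = nM-\dim {\mathcal K}-\dim {\mathcal L}(\mu) = nM-\sum _{i=1}^r \kappa _i-\kappa _0 ,
\label{eq:rankplan}
\end{align}
where $\kappa _i:=\dim {\mathcal K}^{(i)}=\dim \ker A^{(i)}$ for $i=1,\dots ,r$, and $\kappa _0:=\dim {\mathcal L}(\mu)=\dim {\mathcal L}'(\mu)$ equals the dimension of the kernel on $V^{\oplus (m_0+1)}$ of the block upper triangular Toeplitz operator appearing in the definition of ${\mathcal L}'(\mu)$ (the one with $A^{(0)}_0-\mu I_n$ in the upper right corner), which may be viewed as recording the ``$\mu$-twisted'' formal type at $\infty$.

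The core of the proof is the local analysis, which should be a block Toeplitz analogue of the Jordan-form lemmas of \cite{DR1}.
For each $i$ I would show that the formal type of $mc_{\mu}(\bold A)$ at $t_i$ differs from that of $\bold A$ only in the summand of $P^{(i)}(x)$ supported on the generalized eigenvalues $0$ and $\mu$, and that this summand is replaced by an explicitly described one of the appropriate dimension, obtained from the old one by a ``shift-and-pad'' operation whose combinatorics are governed by $\kappa _i$ for a finite point $t_i$ ($i\geq 1$), and by $\kappa _0$ for the point $\infty$ --- where, since the $\mu$-shift enters through ${\mathcal L}(\mu)$ rather than ${\mathcal K}$, the roles of the generalized eigenvalues $0$ and $\mu$ are interchanged.
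Granting such a description, $\dim {\mathcal C}^{(i)}(mc_{\mu}(\bold A))-\dim {\mathcal C}^{(i)}(\bold A)$ becomes an explicit quadratic expression in $n$, $d$ and $\kappa _i$ (resp.\ $\kappa _0$); summing over $i=0,\dots ,r$ and substituting (\ref{eq:rankplan}) should reproduce the right-hand side $(M-1)(d^2-n^2)$ of (\ref{eq:idxplan}), exactly as in \cite{DR1}.
Irreducibility of $V$ --- hence of $mc_{\mu}(V)$, by the proposition above --- is used to evaluate $\dim {\mathcal K}$ and $\dim {\mathcal L}(\mu)$ (compare Proposition \ref{prop:relKL}(iii)) and to make the formal-type bookkeeping an exact equality.

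The main obstacle is precisely this local lemma.
In the Fuchsian situation one only has to track the Jordan block sizes of a single residue matrix; here one must track the full formal type at an irregular singular point --- the conjugacy class of $P^{(i)}(x)$ in ${\rm End}(V)[x]/(x^{m_i+1})$ --- and prove that middle convolution acts on it by the expected ``shift-and-pad'' operation.
The difficulty is that middle convolution is a global construction, carried out through the quotient by ${\mathcal K}+{\mathcal L}(\mu)$, whereas the formal type is a purely local datum; reconciling the two directly --- rather than through generalized Okubo normal forms as in \cite{Kaw} or Harnad duality as in \cite{Yam} --- requires a careful study of how the convolution matrices $\tilde A^{(i)}_j$ of (\ref{eq:tAji}) descend to $mc_{\mu}(V)$ and of the block Toeplitz reorganization of the resulting action near each $t_i$.
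Once this local lemma is available, the assembly via (\ref{eq:idxplan}) and (\ref{eq:rankplan}) is a bookkeeping computation entirely analogous to \cite{DR1}.
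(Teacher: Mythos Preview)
Your strategy is the right shape, and it matches the architecture the paper uses in its own argument --- but you should be aware that the statement you are attempting is a \emph{conjecture} in the paper, not a theorem. The paper does not prove it in general; it proves only the special case $m_i\le 1$ with all $A^{(i)}_1$ semi-simple (Proposition~\ref{prop:indrig}). So there is no ``paper's own proof'' to compare against except in that restricted setting.

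In that special case, the paper does exactly what you outline. It computes, by an explicit block-diagonalisation of $\tilde A^{(i)}_1$ followed by reading off the $[l,l]$-blocks of the transformed $\tilde A^{(i)}_0$, the formal type of $mc_\mu(\bold A)$ at each $t_i$ and at $\infty$, and obtains the local formulae
\[
{\rm idx}_i(mc_\mu(\bold A))-{\rm idx}_i(\bold A)=2(n-\tilde n)\bigl(2n-\dim{\mathcal K}^{(i)}\bigr)\quad(i\ge1),
\]
together with the analogue at $\infty$ with $\dim{\mathcal L}(\mu)$ in place of $\dim{\mathcal K}^{(i)}$. Summing and using $\tilde n=n(2r+1)-\sum_i\dim{\mathcal K}^{(i)}-\dim{\mathcal L}(\mu)$ (your (\ref{eq:rankplan}) with $M=2r+1$) gives zero. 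This is your (\ref{eq:idxplan}) on the nose; the expected general local formula, consistent with your bookkeeping, would be ${\rm idx}_i(mc_\mu(\bold A))-{\rm idx}_i(\bold A)=2(n-d)\bigl((m_i+1)n-\kappa_i\bigr)$.

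The gap in your proposal is exactly the one you name: the local lemma. You have not proved it, and neither has the paper beyond $m_i\le 1$ with semi-simple leading term. The paper's method for the special case relies heavily on the fact that $A^{(i)}_1$ is diagonalisable, so that one can conjugate $\tilde A^{(i)}_1$ to block-diagonal form by an explicit unipotent and then read off the diagonal blocks of the transformed $\tilde A^{(i)}_0$; when $m_i\ge 2$ or the leading matrix is not semi-simple, the formal type is governed by a genuinely higher-order jet and no such clean reduction is available. Your reformulation of ${\mathcal C}^{(i)}$ as the centraliser of $P^{(i)}(x)$ in ${\rm End}(V)[x]/(x^{m_i+1})$ is correct and useful, but turning the global quotient by ${\mathcal K}+{\mathcal L}(\mu)$ into a statement about how this centraliser changes is precisely the open problem. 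In short: your outline is a faithful blueprint for a proof, and it specialises to the paper's argument when $m_i\le 1$, but as written it is a programme, not a proof.
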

We will prove the conjecture for a special case in section \ref{sec:mileq1} (see Proposition \ref{prop:indrig}).

We define the local index of rigidity by
\begin{align}
{\rm idx}_i({\bold A}) = {\rm idx}_i [A^{(i)}_{m_i}, \dots , A^{(i)}_{0}] = \dim ({\mathcal C}^{(i)} ) -\left( m_i +1 \right) (\dim (V))^2 .
\end{align}
Then we have
\begin{align}
{\rm idx}({\bold A}) =  \sum _{i=0}^r {\rm idx}_i({\bold A}) + 2 (\dim (V))^2 .
\label{eq:idxlidx}
\end{align}
\subsection{Example} \label{subsec:ex}

We consider irreducible systems of differential equations of size two written as
\begin{align}
& \frac{dY}{dz} 
= \left( - A^{(0)}_1+\frac{A^{(1)}_0}{z} \right) Y , \; \;
Y=
\left(
\begin{array}{l}
y_{1}(z) \\
y_{2}(z) 
\end{array}
 \right) . 
\label{eq:dYdzA01zY0}
\end{align}
It has an irregular singularity at $z=\infty $ and a regular singularity at $z=0$. 

First we consider the case that $A^{(0)}_1$ is semi-simple.
It follows from irreducibility that $A^{(0)}_1 $ is not scalar.
By applying addition (i.e. multiplying $e^{\nu ' z} z^{\alpha '} $ to the solution $Y$) and gauge transformation (i.e. multiplying a constant matrix to the solution $Y$), we may assume that $A^{(0)}_1$ is a diagonal matrix with the eigenvalues $0$ and $-\nu (\neq 0 )$ and $A^{(1)}_0$ has the eigenvalues $0$ and $-\gamma  $.
Then we may set
\begin{align}
& A^{(0)}_1= \left(
\begin{array}{cc}
0 & 0 \\
0 & -\nu
\end{array}
\right) , \quad 
A^{(1)}_0=  \left(
\begin{array}{ll}
- \alpha  & k \\
\frac{\alpha(\gamma - \alpha )}{k}  & \alpha -\gamma  
\end{array}
\right) .
\label{eq:A0A1} 
\end{align}
It follows from irreducibility that $k \neq 0$ and $\alpha \neq 0$.
By eliminating $y_2 (z)$ in Eq.(\ref{eq:dYdzA01zY0}), we have a second order linear differential equation,
\begin{align}
& z\frac{d^2y_1}{dz^2} + (\gamma -\nu z)  \frac{dy_1}{dz} -\alpha \nu y_1=0.  \label{eq:y1DE} 
\end{align}
If $\nu = 1$, then Eq.(\ref{eq:y1DE}) represents the confluent hypergeometric differential equation.
Eq.(\ref{eq:y1DE}) for the case $\nu \neq 0$ reduces to the confluent hypergeometric differential equation by changing the variable $z'= \nu z$.
The index of rigidity for Eq.(\ref{eq:dYdzA01zY0}) is two, because $\dim ({\mathcal C}^{(0)} ) =4$ and $\dim ({\mathcal C}^{(1)}) =2$ which follows from $\nu \neq 0$ and 
$k \neq 0$.

We investigate middle convolution $mc_{\mu } $ for the matrices in Eq.(\ref{eq:A0A1}).
Convolution matrices are given as
\begin{align}
& \tilde{A}^{(0)}_1=
\left(
\begin{array}{cc}
A^{(0)}_1 & A^{(1)}_0 \\
0 & 0 
\end{array}
\right) =
\left(
\begin{array}{cccc}
0 & 0 & - \alpha  & k \\
0 & -\nu & \frac{\alpha(\gamma - \alpha )}{k}  & \alpha -\gamma \\
0 & 0 & 0 & 0 \\
0 & 0 & 0 & 0 
\end{array}
\right) ,\\
& 
\tilde{A}^{(1)}_0=
\left(
\begin{array}{cc}
0 & 0 \\
A^{(0)}_1 & A^{(1)}_0 +\mu I_2
\end{array}
\right) =
\left(
\begin{array}{cccc}
0 & 0 & 0 & 0 \\
0 & 0 & 0 & 0 \\
0 & 0 & - \alpha +\mu  & k \\
0 & -\nu & \frac{\alpha(\gamma - \alpha )}{k}  & \alpha -\gamma +\mu 
\end{array}
\right) . \nonumber
\end{align}
The dimension of the space ${\mathcal K} (\simeq {\mathcal K}_1)$ is one and the space is described as
\begin{align}
& {\mathcal K}= \left( 
\begin{array}{c}
0 \\
\mbox{Ker}(A^{(1)}_0) 
\end{array}
 \right) 
= \Cplx \left( 
\begin{array}{c}
0 \\
0 \\
k \\
\alpha 
\end{array}
 \right) .
\end{align}
The space ${\mathcal L} (\mu )$ $(\mu \neq 0) $ is described as
\begin{align}
{\mathcal L} (\mu ) & = 
\left\{ \left( 
\begin{array}{c}
v ^{(0)} _{1} \\
-\ell
\end{array}
\right) \: \vline \: 
\left( 
\begin{array}{cc}
A ^{(0)} _{1} & -A ^{(1)} _{0} -\mu  \\
0 & A ^{(0)} _{1} 
\end{array}
\right) 
\left( 
\begin{array}{c}
v ^{(0)} _{1} \\
\ell
\end{array}
\right) 
 =\left( 
\begin{array}{c}
0 \\
0
\end{array}
\right) 
\right\} \\
&  = \mbox{Ker} \left(
\begin{array}{cccc}
0 & 0 & - \alpha + \mu  & k \\
0 & -\nu  & \frac{\alpha(\gamma - \alpha )}{k}  & \alpha -\gamma +\mu  \\
0 & 0 & 0 & 0 \\
0 & 0 & 0 & -\nu 
\end{array}
\right) .
\nonumber
\end{align}
Hence the dimension of the space ${\mathcal L} (\alpha )$ for the case $\mu \neq \alpha $ (resp. $\mu = \alpha  $) is one (resp. two).
We concentrate on the case $\mu =\alpha (\neq 0) $.
A basis of the space ${\mathcal L} (\alpha ) $ is given by
\begin{align}
\left( 
\begin{array}{c}
1 \\
0 \\
0 \\
0 
\end{array}
  \right) , \; 
\left( 
\begin{array}{c}
0 \\
\alpha (\gamma - \alpha ) \\
k\nu \\
0
\end{array}
\right) .
\end{align}
Set
\begin{align}
& S= \left(
\begin{array}{cccc}
0 & 1 & 0 & 0 \\
1 & 0 & \alpha (\gamma - \alpha ) & 0 \\
0 & 0 & k\nu  & k \\ 
0 & 0 & 0 & \alpha 
\end{array}
\right) .
\end{align}
Then we have
\begin{align}
& S^{-1} \tilde{A} ^{(0)} _{1}  S = \left(
\begin{array}{cccc}
-\nu & 0 & 0 & 0 \\
0 & 0 & -k\nu \alpha & 0 \\
0 & 0 & 0 & 0 \\ 
0 & 0 & 0 & 0 
\end{array}
\right) , \quad 
S^{-1} \tilde{A} ^{(1)} _{0}  S = 
\left(
\begin{array}{cccc}
\alpha -\gamma  & 0 & 0 & 0 \\
0 & 0 & 0 & 0 \\
1/\alpha  & 0 & 0 & 0 \\ 
-\nu/\alpha & 0 & 0 & \alpha 
\end{array}
\right) . \label{eq:S-1AS}
\end{align}
Since the second, the third and the fourth column of the matrix $S$ are divisors of the quotient space $mc_{\alpha } (\Cplx ^2 ) = (\Cplx ^2)^{\oplus 2} / ({\mathcal K} + {\mathcal L}(\alpha ) )$, the matrix elements of $\tilde{A} ^{(1)} _{0}  $ and $\tilde{A} ^{(0)} _{1} $ appear as $(1,1)$-elements of Eq.(\ref{eq:S-1AS}).
Hence Eq.(\ref{eq:dYdzA01zY0}) is transformed to 
\begin{align}
& \frac{dy}{dz} 
= \left( \nu +\frac{\alpha -\gamma }{z} \right) y
\label{eq:dydzrk1}
\end{align}
by the middle convolution $mc_{\alpha }$.
The solutions of Eq.(\ref{eq:dydzrk1}) is given by $y = c \exp (\nu z) z^{\alpha -\gamma } $ ($c$: a constant).

We are going to recover Eq.(\ref{eq:dYdzA01zY0}) from Eq.(\ref{eq:dydzrk1}) and obtain integral representations of solutions of Eq.(\ref{eq:dYdzA01zY0}), which arise from the equality $mc _{-\alpha } mc _{\alpha } =\rm{id}$.
We apply middle convolution $mc _{-\alpha }  $ to Eq.(\ref{eq:dydzrk1}).
Then we have
\begin{align}
& \frac{dW}{dz}=
\left(
-\left(
\begin{array}{cc}
-\nu  & \alpha -\gamma \\
 0 & 0 
\end{array}
\right) 
+ \frac{1}{z}
\left(
\begin{array}{cc}
0 & 0 \\
-\nu  & -\gamma 
\end{array}
\right) 
\right) W,
\label{eq:W} 
\end{align}
It follows from Theorem \ref{thm:DRintegrepr} that the function 
\begin{equation}
W = \left(
\begin{array}{l}
\int _{C} \exp (\nu w) w^{\alpha -\gamma } (z-w)^{-\alpha } dw \\
\int _{C} \exp (\nu w) w^{\alpha -\gamma -1} (z-w)^{-\alpha } dw 
\end{array}
\right) ,
\end{equation}
is a solution of Eq.(\ref{eq:W}) by choosing a cycle $C $ appropriately.
For simplicity we assume $\nu \in \Rea _{>0}$.
Then we can take cycles $C$ which start from $w=-\infty $, move along a real axis, turn the point $w= z$ or $w=0$ and come back to $w=-\infty  $.
By setting
\begin{equation}
W= \left(
\begin{array}{cc}
\alpha -\gamma & -k \\
\nu & 0
\end{array}
\right) \tilde{W} ,
\end{equation}
we recover the matrices in Eq.(\ref{eq:A0A1}) such as the function $\tilde{W}$ satisfies Eq.(\ref{eq:dYdzA01zY0}).
Consequently we obtain integral representations of solutions of Eq.(\ref{eq:dYdzA01zY0}) which are expressed as
\begin{equation}
\tilde{W} = \frac{1}{k\nu} 
\left(
\begin{array}{cc}
0 & k \\
-\nu & \alpha -\gamma 
\end{array}
\right)
\left(
\begin{array}{l}
\int _{C} \exp (\nu w) w^{\alpha -\gamma } (z-w)^{-\alpha } dw \\
\int _{C} \exp (\nu w) w^{\alpha -\gamma -1} (z-w)^{-\alpha } dw 
\end{array}
\right) . 
\end{equation}
In particular, the function
\begin{equation}
y(z)= \int _{C} \exp (\nu w) w^{\alpha -\gamma -1} (z-w)^{-\alpha } dw 
\end{equation}
satisfies Eq.(\ref{eq:y1DE}), and we obtain integral representations of solutions of the confluent hypergeometric differential equation.

Next we consider the case that $A^{(0)}_1$ is nilpotent.
Set
\begin{equation}
A^{(0)} _1 = \left(
\begin{array}{cc}
0 & -1 \\
0 & 0  
\end{array}
\right) , \quad A^{(1)} _0 = \left(
\begin{array}{cc}
a_{1,1} & a_{1,2} \\
a_{2,1} & a_{2,2}  
\end{array}
\right) .
\label{eq:Anilp}
\end{equation}
Then it follows from irreducibility that $a_{2,1} \neq 0$.
The index of rigidity is also two.
But we cannot reduce to rank one case by applying additions $A^{(0)} _1 \rightarrow A^{(0)} _1 + \alpha I_2$, $A^{(1)} _0 \rightarrow A^{(1)} _0 + \beta I_2$ and middle convolution $mc _{\mu } $, because ${\rm dim} ( {\mathcal L} '(\mu )) \leq 1$ for any $\alpha, \beta , \mu$, which follows from $a_{2,1} \neq 0$.
Note that solutions of the differential equations determined by Eq.(\ref{eq:Anilp}) are expressed in terms of Bessel's function by setting $z=x^2$.

\section{The case $m_i \leq 1$ for all $i$} \label{sec:mileq1}

In this section, we investigate the index of rigidity and middle convolution for the case $m_i \leq 1$ for all $i$ (see Eq.(\ref{eq:DEirr0})).
The case $m_i=0$ is included to the case $m_i=1$ by setting $A_1^{(i)} =0$.
We assume that $A_1^{(i)}$ is semi-simple for all $i$ and $V(=\Cplx ^n)$ is irreducible as $\langle {\bold A} \rangle $-module.

\subsection{Index of rigidity}
To study the index of rigidity, we investigate Eq.(\ref{eq:relrig}) for the case $m_i=1$.
We ignore the superscript ${}^{(i)}$.
Then Eq.(\ref{eq:relrig}) is written as
\begin{align}
& A_1 C_1 = C_1 A_1, \quad A_1 C_0 - C_0 A_1 + A_0 C_1 - C_1 A_0 =0 .
\label{eq:relrig1}
\end{align}
By the assumption that $A_1$ is semi-simple,
we diagonalize $A_1$ as
\begin{align}
& 
P ^{-1} A_1 P= 
\left.
\left( 
\begin{array}{cccc}
d_1 I_{n_1} & 0 & \dots & 0  \\
0 & d_2 I_{n_2} & \dots & 0  \\
0 & 0 & \ddots & \vdots \\
0 & \cdots  & 0 & d_{k} I_{n_{k}} 
\end{array}
\right)
\right. , \quad d_i \neq d_j \; (i\neq j). \label{eq:A1blk}
\end{align}
Write
\begin{align}
& 
 P ^{-1} A_0 P = 
\left.
\left( 
\begin{array}{cccc}
A_0^{[1,1]} & A_0^{[1,2]} & \dots & A_0^{[1,k]}   \\
A_0^{[2,1]} & A_0^{[2,2]} & \dots & A_0^{[2,k]}   \\
\vdots & \vdots & \ddots & \vdots \\
 A_0^{[k,1]} &  A_0^{[k,2]}& \dots & A_0^{[k,k]} 
\end{array}
\right)
\right. , \label{eq:A0blk}
\end{align}
where $A_0^{[i,j]}$ is a $n_i \times n_j$ matrix.
It follows from $A_1 C_1 = C_1 A_1 $ that $C_1$ is written as
\begin{align}
& P^{-1}C_1 P  = 
\left.
\left( 
\begin{array}{cccc}
C^{[1]}_1 & 0 & \dots & 0  \\
0 & C^{[2]}_1 & \dots & 0  \\
0 & 0 & \ddots & \vdots \\
0 & \cdots  & 0 & C^{[k]}_1 
\end{array}
\right)
\right. ,
\end{align}
where $ C^{[l]}_1 $ is a $n_l \times n_l$ matrix. 
Then
\begin{eqnarray}
& &  P^{-1}( A_0 C_1 - C_1 A_0 ) P= \\
& &  \left( 
\begin{array}{cccc}
A_0^{[1,1]} C_1^{[1]} - C_1^{[1]} A_0^{[1,1]} & A_0^{[1,2]} C_1^{[2]} - C_1^{[1]} A_0^{[1,2]} & \dots & A_0^{[1,k]} C_1^{[k]} - C_1^{[1]} A_0^{[1,k]}   \\
A_0^{[2,1]} C_1^{[1]} - C_1^{[2]} A_0^{[2,1]} & A_0^{[2,2]} C_1^{[2]} - C_1^{[2]} A_0^{[2,2]} & \dots & A_0^{[2,k]} C_1^{[k]} - C_1^{[2]} A_0^{[2,k]}   \\
\vdots & \vdots & \ddots & \vdots \\
A_0^{[k,1]} C_1^{[1]} - C_1^{[k]} A_0^{[k,1]} & A_0^{[k,2]} C_1^{[2]} - C_1^{[k]} A_0^{[k,2]} & \dots & A_0^{[k,k]} C_1^{[k]} - C_1^{[k]} A_0^{[k,k]}   \\
\end{array}
\right) .  \nonumber 
\end{eqnarray}
By writing
\begin{align}
&  P^{-1}C_0 P = 
\left.
\left( 
\begin{array}{cccc}
C_0^{[1,1]} & C_0^{[1,2]} & \dots & C_0^{[1,k]}   \\
C_0^{[2,1]} & C_0^{[2,2]} & \dots & C_0^{[2,k]}   \\
\vdots & \vdots & \ddots & \vdots \\
C_0^{[k,1]} & C_0^{[k,2]}& \dots & C_0^{[k,k]} 
\end{array}
\right)
\right. ,
\end{align}
we have
\begin{align}
& P^{-1}( A_1 C_0 - C_0 A_1 )  P = \\
& \left( 
\begin{array}{cccc}
0 & (d_1-d_2) C_0^{[1,2]} & \dots & (d_1-d_{k})  C_0^{[1,k]}   \\
(d_2-d_1) C_0^{[2,1]} & 0 & \dots & (d_2-d_{k}) C_0^{[2,k]}   \\
\vdots & \vdots & \ddots & \vdots \\
(d_{k} -d_1) C_0^{[k,1]} & (d_{k} -d_2) C_0^{[k,2]}& \dots & 0 
\end{array}
\right) . \nonumber
\end{align}
It follows from $A_1 C_0 - C_0 A_1 + A_0 C_1 - C_1 A_0 =0$ that $A_0^{[i,i]} C_1^{[i]} = C_1^{[i]} A_0^{[i,i]}$ and 
$C_0^{[i,j]} $ $(i\neq j)$ is determined as $C_0^{[i,j]}=-(A_0^{[i,j]} C_1^{[j]} - C_1^{[i]} A_0^{[i,j]})/(d_i-d_j)$.
Elements of $C_0^{[i,i]} $ are not restricted by relations.
Hence the dimension of solutions of Eq.(\ref{eq:relrig1}) is
\begin{align}
&  \sum _{l=1} ^k \{  (n_{l} )^2 + \dim  Z(A_0^{[l,l]} ) \} ,  
\end{align}
where $Z ( A_0^{[l,l]} )= \{ X \in \Cplx ^{n_l \times n_l} | X A_0^{[l,l]} = A_0^{[l,l]} X \}$.
Let $I_{a,b}$ be the $a \times b$ matrix whose $(i,j)$-element is given by $\delta _{i,j}$, ${\bold q} = (q_{1} ,\dots ,q_{p} ) \in \Zint ^{p}$ ($q_{1} +\dots +q_{p} =n $, $q_{1} \geq \dots \geq q_{p}\geq 1$), $\underline{\lambda } = (\lambda _{1},\dots , \lambda _{p} ) \in \Cplx ^{p}$.
Following Oshima \cite{O}, set
\begin{equation}
L({\bold q} ; \: \underline{\lambda } ) = 
\left(
\begin{array}{cccc}
\lambda _{1} I_{q_{1}} & I_{q_1,q_2} & 0 & \cdots \\
0 & \lambda _{2} I_{q_{2}} & I_{q_2,q_3}  & \ddots \\
0 & 0 & \lambda _{3} I_{q_{3}} & \ddots \\
\vdots & \ddots & \ddots & \ddots 
\end{array}
\right) .
\end{equation}
Every matrix is conjugate to $L({\bold q} ; \: \underline{\lambda } ) $ for some ${\bold q}$, $\underline{\lambda }$.
Note that if $\lambda  _i \neq \lambda _j$ $(i\neq j) $ then the matrix $L({\bold q} ; \: \underline{\lambda } ) $ is conjugate to the diagonal matrix whose multiplicity of the eigenvalue $\lambda _i$ is  $q_i$.
If the matrix $ A_0^{[l,l]}$ is conjugate to $L(n_{l,1}, \dots , n_{l,p_{l}} ; \: d_{l,1} , \dots, d_{l,p_{l}} ) $, then 
the dimension of solutions of Eq.(\ref{eq:relrig1}) is given by
\begin{align}
& \sum _{l=1} ^k \left\{  (n_{l} )^2  + \sum _{j=1}^{p_l} (n_{l,j} )^2 \right\} .
\label{eq:dimC1C0}
\end{align}

We denote the type of multiplicities of the matrices $(A_1, A_0)$ which are expressed as Eqs.(\ref{eq:A1blk}), (\ref{eq:A0blk}) and $A _0 ^{[l,l]} $ $(l=1,\dots ,k)$ is conjugate to $ L (n_{l,1} ,\dots ,n_{l,p_l} ; \lambda _{l,1} , \dots ,\lambda _{l,p_l})$ by
\begin{equation}
(n_1, n_2, \dots ,n_k) - ((n_{1,1}, \dots ,n_{1,p_1}) , (n_{2,1},\dots ,n_{2,p_2}), \dots , (n_{k,1},\dots ,n_{k,p_k})). 
\end{equation}
Note that $n_l =n_{l,1} + \dots +n_{l,p_l}$ $(l=1,\dots ,k)$.
Then the local index of rigidity of the matrices $(A_1, A_0)$ is calculated as
\begin{align}
2n^2 - \sum _{l=1} ^k \left\{  (n_{l} )^2  + \sum _{j=1}^{p_l} (n_{l,j} )^2 \right\} .
\label{eq:locidxA1A0}
\end{align}
If $A_1 =0$, then $k=1$, $n_1=n$ and we simplify the notation $(n_1) - ((n_{1,1}, \dots ,n_{1,p_1})) $ by $(n_{1,1}, \dots ,n_{1,p_1})$.
Note that the notation $(n_{1,1}, \dots ,n_{1,p_1})$ was already adapted by Kostov \cite{Kos} and Oshima \cite{O} for the case of regular singularity.

By combining Eq.(\ref{eq:locidxA1A0}) with Eq.(\ref{eq:idxlidx}) we have the following proposition:
\begin{proposition}
We assume that $m_0=\dots =m_r=1$, $A_1^{(i)}$ are semi-simple for $i=0,\dots ,r$ and $\Cplx ^n$ is irreducible as $\langle {\bold A} \rangle $-module.
Let 
\begin{align}
& (n^{(i)}_1, n^{(i)}_2, \dots ,n^{(i)}_{k^{(i)}}) - \\
& ((n^{(i)}_{1,1}, \dots ,n^{(i)}_{1,p^{(i)}_1}) , (n^{(i)}_{2,1},\dots ,n^{(i)}_{2,p_2^{(i)}}), \dots , (n^{(i)}_{k^{(i)},1},\dots ,n^{(i)}_{k^{(i)},p^{(i)}_{k^{(i)}}})), \nonumber
\end{align}
$(n^{(i)}_1 \geq n^{(i)}_2 \geq \dots \geq n^{(i)}_{k^{(i)}}$, $n^{(i)}_{j,1}\geq \dots \geq n^{(i)}_{j,p_j^{(i)}} \; (j=1,\dots , k^{(i)})) $ be the type of multiplicities of $(A^{(i)}_1, A^{(i)}_0)$.
Then the index of rigidity is equal to
\begin{align}
& {\rm idx}({\bold A}) = \sum _{i=0}^r  \sum _{j=1}^{k^{(i)}} \left( (n^{(i)}_{j})^2 +  \sum _{j'=1} ^{ p_j^{(i)}} (n^{(i)}_{j,j'})^2 \right) -2 r n^2 .
\label{eq:idxrig}
\end{align}
\end{proposition}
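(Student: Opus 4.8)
The plan is to obtain the formula by assembling pieces that are already in place in this subsection; the proof is essentially a dimension count. First I would note that under the hypothesis $m_0=\dots=m_r=1$ we have $M=r+\sum_{i=0}^r m_i=2r+1$, so $(M-1)(\dim V)^2=2rn^2$ and the definition of the index of rigidity reads ${\rm idx}({\bold A})=\sum_{i=0}^r\dim({\mathcal C}^{(i)})-2rn^2$; equivalently one can package the same content through Eq.(\ref{eq:idxlidx}), ${\rm idx}({\bold A})=\sum_{i=0}^r{\rm idx}_i({\bold A})+2n^2$, together with ${\rm idx}_i({\bold A})=\dim({\mathcal C}^{(i)})-2n^2$. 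Either way, everything reduces to evaluating $\dim({\mathcal C}^{(i)})$ for each fixed $i$ in terms of the type of multiplicities of $(A^{(i)}_1,A^{(i)}_0)$.

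That evaluation is exactly the local computation performed above. For a single pair $(A_1,A_0)$ with $A_1$ semisimple, I would recall that after conjugating by $P$ to the block form (\ref{eq:A1blk}) the relation $A_1C_1=C_1A_1$ in (\ref{eq:relrig1}) forces $C_1$ to be block diagonal with blocks $C^{[l]}_1$ of size $n_l$, and the second relation in (\ref{eq:relrig1}) then decouples: its $(i,j)$-block with $i\neq j$ determines $C_0^{[i,j]}$ uniquely from the $C_1^{[\,\cdot\,]}$'s and contributes no further freedom, while its $(l,l)$-block splits into the requirement that $C^{[l]}_1$ centralize $A_0^{[l,l]}$ together with $C_0^{[l,l]}$ being unconstrained. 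Hence $\dim({\mathcal C}^{(i)})=\sum_{l}\bigl((n^{(i)}_l)^2+\dim Z(A_0^{[l,l]})\bigr)$, and putting each diagonal block $A_0^{[l,l]}$ into the normal form $L(n^{(i)}_{l,1},\dots,n^{(i)}_{l,p^{(i)}_l};\dots)$ and using $\dim Z(L({\bold q};\underline\lambda))=\sum_j q_j^2$ turns this into Eq.(\ref{eq:dimC1C0}), i.e. $\dim({\mathcal C}^{(i)})=\sum_{j=1}^{k^{(i)}}\bigl((n^{(i)}_j)^2+\sum_{j'=1}^{p^{(i)}_j}(n^{(i)}_{j,j'})^2\bigr)$.

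Finally I would substitute this expression for $i=0,\dots,r$ into ${\rm idx}({\bold A})=\sum_{i=0}^r\dim({\mathcal C}^{(i)})-2rn^2$ (equivalently, combine the local computation (\ref{eq:locidxA1A0}) with (\ref{eq:idxlidx}) as indicated in the text), which gives Eq.(\ref{eq:idxrig}) immediately. The ordering conventions $n^{(i)}_1\geq\dots\geq n^{(i)}_{k^{(i)}}$ and $n^{(i)}_{j,1}\geq\dots\geq n^{(i)}_{j,p^{(i)}_j}$ only serve to pin down a canonical representative of the type of multiplicities and require no separate argument. I do not expect a real obstacle: once (\ref{eq:dimC1C0}) is established the remaining step is a one-line count, and the single point worth a word of care is the identity $\dim Z(L({\bold q};\underline\lambda))=\sum_j q_j^2$ (valid even when the eigenvalues $\lambda_j$ coincide, following Oshima \cite{O}), which is precisely what guarantees that ${\rm idx}({\bold A})$ depends only on the combinatorial data $(n^{(i)}_j)-((n^{(i)}_{j,j'}))$ and not on the actual eigenvalues of the $A^{(i)}_1,A^{(i)}_0$.
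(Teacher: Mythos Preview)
Your proposal is correct and matches the paper's approach exactly: the paper derives Eq.~(\ref{eq:dimC1C0}) by the block analysis you describe and then states that the proposition follows ``by combining Eq.~(\ref{eq:locidxA1A0}) with Eq.~(\ref{eq:idxlidx}),'' which is precisely the substitution you carry out. Your remark that the centralizer dimension formula $\dim Z(L({\bold q};\underline\lambda))=\sum_j q_j^2$ is the one subtle ingredient (borrowed from \cite{O}) is also apt, and is implicit rather than spelled out in the paper.
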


\subsection{Subspace}
Next we investigate solutions of the equations
\begin{align}
& A_1 v_0 =0, \; A_1 v_1 +A_0 v_0= 0,
\label{eq:A1A0v}
\end{align}
for the case that the matrices $A_1$ and $A_0$ are expressed as Eqs.(\ref{eq:A1blk}), (\ref{eq:A0blk}) and $d_1=0$ to understand the subspaces ${\mathcal K}^{(i)}$ and ${\mathcal L}'{(\lambda )}$.
Write
\begin{align}
v_1= P^{-1} \left(
\begin{array}{c}
 v_1^{[1]} \\
\vdots \\
v_1^{[n]} 
\end{array}
\right) , 
\; v_0 = P^{-1} \left(
\begin{array}{c}
v_0^{[1]} \\
\vdots \\
v_0^{[n]} 
\end{array}
\right) ,\;
(v_1^{[l]}, v_0^{[l]} \in \Cplx ^{n_l} ).
\end{align}
It follows from $A_1 v_0 =0$ that $v_{0} ^{[i]} =0$ $(i \geq 2 )$.
Hence 
\begin{align}
P^{-1} ( A_0 v_0 +A_1 v_1)  =\left(
\begin{array}{c}
A _0 ^{[1,1]} v_{0}^{[1]}  \\
A _0 ^{[1,2]} v_{0}^{[1]} + d_2 v_{1}^{[2]} \\
\vdots \\
A_0 ^{[1,k]} v_{0} ^{[1]} + d_k v_{1}^{[k]} 
\end{array}
\right) =0 , 
\end{align}
$v_{0} ^{[1]} \in {\rm Ker} ( A _0 ^{[1,1]} )$ and $v_{1}^{[l]} $ $(l\geq 2)$ is determined as $-A_0 ^{[1,l]} v_{0} ^{[l]} /d_l$.
The elements of $v_{1}^{[1]} $ are independent.
Hence the dimension of solutions of Eq.(\ref{eq:A1A0v}) is
$n_1 + \dim ({\rm Ker} ( A _0 ^{[1,1]} ) )$.
If the matrix $ A_0^{[1,1]}$ is conjugate to $L(n_{1,1}, \dots , n_{1,p_{1}} ; \: d_{1,1} , \dots, d_{1,p_{1}} ) $ and $d_{1,1} =0$, then 
the dimension of solutions of Eq.(\ref{eq:A1A0v})  
is $n_1 +n_{1,1}$.
\subsection{Middle convolution}
We now study the matrices for which the middle convolution is applied. 
Let $i \in \{ 1,\dots ,r \}$ and $m_i =1$.
We investigate the matrices $\tilde{A}^{(i)}_1$ and $\tilde{A}^{(i)}_0 $ for the case that the matrices $A^{(i)}_1$ and $A^{(i)}_0$ are expressed as Eqs.(\ref{eq:A1blk}), (\ref{eq:A0blk}), $d_1=0$ and $\Cplx ^n $ is irreducible as $\langle {\bold A} \rangle $-module.
By changing the order of the direct sum $V'= (\Cplx ^n)^{\oplus M} $, the matrices $\tilde{A}^{(i)}_1$ and $\tilde{A}^{(i)}_0 $ are expressed as
\begin{align}
& (QP^{\oplus M})^{-1} \tilde{A}_1 ^{(i)} QP^{\oplus M} =\left( 
\begin{array}{ccc}
P^{-1}A_1 P & P^{-1 } A_0P  +\mu I_n & P ^{-1} \overline{A} P^{\oplus (M-2)} \\
0 & 0 & 0 \\
0 & 0 & 0
\end{array}
\right) , \\
& (QP^{\oplus M})^{-1} \tilde{A}_0 ^{(i)}QP^{\oplus M} =\left( 
\begin{array}{ccc}
\mu & 0 & 0 \\
P^{-1}A_1 P & P^{-1 } A_0P  +\mu I_n  & P^{-1} \overline{A} P^{\oplus (M-2)} \\
0 & 0 & 0
\end{array}
\right) , \nonumber 
\end{align}
where $\overline{A} =\left( A^{(0)} _{m_0}  \dots A^{(i-1)} _{\delta _{i,1}} \; A^{(i+1)} _{m_{i+1}} \dots \right) $ and $Q$ represents the change of the order of the direct sum $(\Cplx ^n)^{\oplus M} $.
Set 
\begin{align}
& A_1^{\#} = P \left( 
\begin{array}{cccc}
0 \cdot I_{n_1} & 0 & \dots & 0  \\
0 & (d_2)^{-1} I_{n_2} & \dots & 0  \\
0 & 0 & \ddots & \vdots \\
0 & \cdots  & 0 & (d_{k})^{-1} I_{n_{k}} 
\end{array}
\right) P^{-1}, \\
& X= - P ^{-1} A_1^{\#} \left( A_0  +\mu I_n \; \overline{A} \right) P^{\oplus (M-1)}, \nonumber \\
& R= 
-  P ^{-1} A_1^{\#} ( A_0  +\mu I_n )P. \nonumber
\end{align}
Since the $j-$th row blocks of the matrix $P^{-1} (A_1 A_1^{\#} -I_n ) P $ zero for $j \geq 2$, the $j-$th row blocks of the matrix $X'=  P^{-1}A_1 P X + P^{-1 } \left( A_0  +\mu I_n \; \overline{A} \right) P^{\oplus (M-1)}$ and $P^{-1}A_1 P R +  P^{-1 } (A_0  +\mu I_n )P $ are also zero for $j \geq 2$.
We denote the size of the matrices $\tilde{A}_ 1^{(i)}$, $\tilde{A}_0 ^{(i)}$ on the space ${\mathcal M}= mc _{\mu } (\Cplx ^n ) $ by $\tilde{n} (= nM-\sum _{i=1}^r \dim {\mathcal K}^{(i)} - \dim {\mathcal L} (\mu ) )$.
Restrictions of the matrices  $\tilde{A}^{(i)}_1$ and $\tilde{A}^{(i)}_0 $ to the space ${\mathcal K}^{(j)}$ $(j\neq i)$ and ${\mathcal L} (\mu )$ are zero, which follow from the definitions of the spaces. 
Thus the matrix $\tilde{A} ^{(i)} _1$ on ${\mathcal M}$ is diagonalized as
\begin{align}
& \left. \left( 
\begin{array}{cc}
I & X \\
0 & I 
\end{array}
\right) ^{-1}
(QP^{\oplus M})^{-1} \tilde{A}_1 ^{(i)} QP^{\oplus M}
 \left( 
\begin{array}{cc}
I & X \\
0 & I 
\end{array}
\right) \right| _{{\mathcal M}'} \\
& = \left. \left( 
\begin{array}{cc}
P^{-1}A_1 P  & X' \\
0 & 0 
\end{array}
\right)  \right| _{{\mathcal M}'} \nonumber 
= \left( 
\begin{array}{cccc}
d_2 I_{n_2} & 0 & \cdots & 0 \\
0  & \ddots &  \ddots & 0 \\
\vdots & \ddots & d_{k} I_{n_{k}} & 0 \\
0 & \cdots & 0 & 0
\end{array}
\right) ,
\end{align}
where ${\mathcal M}'= \left( 
\begin{array}{cc}
I & X \\
0 & I 
\end{array}
\right) ^{-1}
(QP^{\oplus M})^{-1} {\mathcal M}
 $, $I$ is a unit matrix of the suitable size 
and the dimension of the kernel of $ \tilde{A} ^{(i)} _1 | _{\mathcal M} $ is $\tilde{n} -n + n_1$.
Since the matrix $P^{-1}A_1 P $ is diagonal, the $[l,l]$-block of $R P^{-1}A_1 P$ coincides with that of $ P^{-1}A_1 P R$, and it is equal to the $[l,l]$-block of $-P^{-1 } (A_0  +\mu I_n )P $, if $l \geq 2$.
Hence 
\begin{align}
& \left. \left( 
\begin{array}{cc}
I & X \\
0 & I 
\end{array}
\right) ^{-1}
(QP^{\oplus M})^{-1} \tilde{A}_0 ^{(i)} QP^{\oplus M}
 \left( 
\begin{array}{cc}
I & X \\
0 & I 
\end{array}
\right) \right| _{{\mathcal M}'} \\
& = \left. \left( 
\begin{array}{cc}
\mu - R P^{-1}A_1 P	 & \mu X -XX' \\
 P^{-1}A_1P  & X'
\end{array}
\right)  \right| _{{\mathcal M}'} \nonumber 
\\
& = \left( 
\begin{array}{cccc}
A_0^{[2,2]} +2\mu I_{n_2} & * & \cdots  &* \\
*  & \ddots &  \ddots & * \\
\vdots & * & A_0^{[k,k]} +2\mu I_{n_{k}} & * \\
\vdots & \ddots & * & \overline{X}' 
\end{array}
\right) , \nonumber
\end{align}
where $\overline{X}' $ is expressed as 
\begin{align}
&  \overline{X}' =  \left( 
\begin{array}{cc}
\overline{A }_0 ^{[1,1]} + \mu I  &  \overline{X}''  \\
0  & 0 
\end{array} 
\right) , \nonumber
\end{align}
$\overline{A }_0 ^{[1,1]}$ is the matrix obtained by replacing the domain and the range of $A _0 ^{[1,1]} $ to $\Cplx ^{n_1} / \mbox{Ker} A _0 ^{[1,1]}$.
It follows from irreducibility that
the rank of $\overline{X}' $ is equal to the size of $\overline{A }_0 ^{[1,1]}$.
Thus, if the matrix $A _0 ^{[1,1]} $ is conjugate to 
\begin{align}
& L(n _{1}^{\langle 0 \rangle } ,\dots , n _{p _{\langle 0 \rangle  }}^{\langle 0 \rangle };  0,\dots ,0) \oplus L (n _{1}^{\langle -\mu  \rangle } ,\dots , n _{p _{\langle -\mu \rangle  }}^{\langle -\mu \rangle };  -\mu  ,\dots ,-\mu  ) \\
&  \oplus L (m_{1,1} ,\dots ,m_{1,p'_1} ; \lambda _{1,1} , \dots ,\lambda _{1,p'_1}) \qquad (\lambda _{1,j} \neq 0, -\mu ), \nonumber
\end{align}
then we have $\tilde{n} - n +n _{1}^{\langle 0 \rangle } \geq n _{1}^{\langle -\mu  \rangle }$ and the matrix $\overline{X}' $ is conjugate to 
\begin{align}
& L (\tilde{n} - n +n _{1}^{\langle 0 \rangle } , n _{1}^{\langle -\mu  \rangle } ,\dots , n _{p _{\langle -\mu \rangle  }}^{\langle -\mu \rangle } ; 0 ,\dots ,0  )  \\
& \oplus L(n _{2}^{\langle 0 \rangle } ,\dots , n _{p _{\langle 0 \rangle  }}^{\langle 0 \rangle }; \mu ,\dots ,\mu ) \oplus L (m_{1,1} ,\dots ,m_{1,p'_1} ; \lambda _{1,1} +\mu , \dots ,\lambda _{1,p'_1} +\mu ). \nonumber
\end{align}
If the matrix $A _0 ^{[l,l]} $ $(l\geq 2) $ is conjugate to $ L (m_{l,1} ,\dots ,m_{l,p_l} ; \lambda _{l,1} , \dots ,\lambda _{l,p_l})$, then the matrix $A _0 ^{[l,l]} +2\mu I_{n_l } $ is conjugate to $ L (m_{l,1} ,\dots ,m_{l,p_l}  ; \lambda _{l,1} +2\mu , \dots ,\lambda _{l,p_l} +2\mu)$.
Hence
\begin{align}
& {\rm idx} _i(mc _{\mu }({\bold A })) - {\rm idx} _i ({\bold A}) = (\tilde{n} -n+ n_1)^2 +\sum _{j=2}^k (n_j)^2  \label{eq:idximcAA} \\
&  + (\dim Z(\overline{A }_0 ^{[1,1]} +\mu ))^2 + (\tilde{n} -n+n_{1,1})^2 +\sum _{j=2}^k (\dim Z(A _0 ^{[j,j]} +2\mu ))^2  -2\tilde{n}^2 \nonumber \\
& - \left( \sum _{j=1}^k (n_j)^2 + (\dim Z(\overline{A }_0 ^{[1,1]} ))^2 + n_{1,1}^2 + \sum _{j=2}^k (\dim Z(A _0 ^{[j,j]} ))^2 -2n^2 \right) \nonumber \\
& = 2(n -\tilde{n})(2n-n_1 -n_{1,1}) = 2(n -\tilde{n})(2n-\dim {\mathcal K} ^{(i)} ) , \nonumber
\end{align}
where $n_{1,1} = \dim ({\rm Ker} A _0 ^{[1,1]} )$.

We investigate the matrices $\tilde{A}^{(0)}_1$ and $\tilde{A}^{(0)}_0 $ for the case that $m_0 =1$, the matrices $A^{(0)}_1$ and $A^{(0)}_0$ are expressed as Eqs.(\ref{eq:A1blk}), (\ref{eq:A0blk}) and $d_1=0 $.
By changing the order of the direct sum $V'= (\Cplx ^n)^{\oplus M} $, 
the matrices $\tilde{A}^{(0)}_1$ and $\tilde{A}^{(0)}_0 + \mu I$ are simultaneously conjugate to
\begin{align}
& \tilde{A}_1 ^{(0)} \sim \left( 
\begin{array}{ccc}
P^{-1}A_1 P & P^{-1 } A_0P  & P ^{-1} \overline{A} P^{\oplus (M-2)} \\
0 & 0 & 0 \\
0 & 0 & 0
\end{array}
\right) , \\
& \tilde{A}_0 ^{(0)} + \mu I \sim  \left( 
\begin{array}{ccc}
\mu & 0 & 0 \\
P^{-1}A_1 P & P^{-1 } A_0P  & P^{-1} \overline{A} P^{\oplus (M-2)} \\
0 & 0 & 0
\end{array}
\right) , \nonumber 
\end{align}
where $\overline{A} =\left( A^{(1)} _{m_1}  \dots A^{(1)} _{1} \; A^{(2)} _{m_{2}} \dots \right) $.
It follows from similar argument to the case $\tilde{A}^{(i)}_1$, $\tilde{A}^{(i)}_0 $ $(i\neq 0)$ that $\tilde{A}^{(0)}_1$ and $\tilde{A}^{(0)}_0 $ are simultaneously conjugate to
\begin{align}
& \left. \tilde{A}_1 ^{(0)}  \right| _{{\mathcal M}}  \sim \left( 
\begin{array}{cccc}
d_2 I_{n_2} & 0 & \cdots & 0 \\
0  & \ddots &  \ddots & 0 \\
\vdots & \ddots & d_{k} I_{n_{k}} & 0 \\
0 & \cdots & 0 & 0
\end{array}
\right) , \\
& \left. \tilde{A}_0 ^{(0)}  \right| _{{\mathcal M}}  \sim 
\left( 
\begin{array}{cccc}
A_0^{[2,2]} & * & \cdots  &* \\
*  & \ddots &  \ddots & * \\
\vdots & * & A_0^{[k,k]} & * \\
\vdots & \ddots & * & \overline{X}' 
\end{array}
\right) ,
\end{align}
where $\overline{X}' $ is expressed as 
\begin{align}
&  \overline{X}' = \left( 
\begin{array}{cc}
\overline{A }_0 ^{[1,1]} - \mu I \! &  \overline{X}''  \\
0  & - \mu I  
\end{array} 
\right) , \nonumber
\end{align}
$\overline{A }_0 ^{[1,1]}$ is the matrix obtained by replacing the domain and the range of $A _0 ^{[1,1]} $ to $\Cplx ^{n_1} / \mbox{Ker}(A _0 ^{[1,1]} -\mu I)$.
If the matrix $A _0 ^{[1,1]} $ is conjugate to 
\begin{align}
& L(n _{1}^{\langle \mu \rangle } ,\dots , n _{p _{\langle \mu \rangle  }}^{\langle \mu \rangle }; \mu ,\dots , \mu ) \oplus L (n _{1}^{\langle 0 \rangle } ,\dots , n _{p _{\langle 0 \rangle  }}^{\langle 0 \rangle }; 0 ,\dots ,0  )  \\
& \oplus L (m_{1,1} ,\dots ,m_{1,p'_1} ; \lambda _{1,1}  , \dots ,\lambda _{1,p'_1} ), \qquad (\lambda _{1,j} \neq 0, \mu ), \nonumber
\end{align}
then we have $ \tilde{n} - n+ n _{1}^{\langle \mu \rangle } \geq n _{1}^{\langle 0 \rangle }$ and the matrix $\overline{X}' $ is conjugate to 
\begin{align}
& L (\tilde{n} - n +n _{1}^{\langle \mu \rangle } , n _{1}^{\langle 0 \rangle } ,\dots , n _{p _{\langle 0 \rangle  }}^{\langle 0 \rangle } ; -\mu ,\dots ,- \mu )  \\
& \oplus L(n _{2}^{\langle \mu \rangle } ,\dots , n _{p _{\langle \mu \rangle  }}^{\langle \mu \rangle }; 0 ,\dots ,0 ) \oplus L (m_{1,1} ,\dots ,m_{1,p'_1} ; \lambda _{1,1} - \mu , \dots ,\lambda _{1,p'_1} -\mu ). \nonumber
\end{align}
%
Hence we also have
\begin{align}
& {\rm idx} _0(mc _{\mu }({\bold A })) - {\rm idx} _0 ({\bold A}) = 2(n -\tilde{n})(2n-\dim {\mathcal L} (\mu ) ) . \label{eq:idx0mcAA} 
\end{align}
We include the case $m_i=0$ to the case $m_i=1$ by setting $A^{(i)} _1=0$ and we have
$\tilde{n} = n(2r+1) -\sum _{i=1}^r \dim {\mathcal K}^{(i)} - \dim {\mathcal L} (\mu ) $.
It follows from Eqs.(\ref{eq:idximcAA}), (\ref{eq:idx0mcAA}) that
\begin{align}
& {\rm idx} (mc _{\mu }({\bold A })) - {\rm idx} ({\bold A}) = 2\tilde{n}^2 -2n^2 + \sum _{i=0}^r \left\{ {\rm idx} _i(mc _{\mu }({\bold A })) - {\rm idx} _i ({\bold A}) \right\} \\
& = 2\tilde{n}^2 -2n^2 + 2(n -\tilde{n}) \left( 2(r+1)n -\sum _{i=1}^r \dim {\mathcal K}^{(i)} - \dim {\mathcal L} (\mu ) \right) \nonumber \\
& = 2\tilde{n}^2 -2n^2 + 2(n -\tilde{n})(n +\tilde{n}) =0 . \nonumber
\end{align}
Hence the index of rigidity is preserved by application of middle convolution, i.e. ${\rm idx}(mc _{\mu }({\bold A })) = {\rm idx}({\bold A})$.
Namely we obtain the following proposition.
\begin{proposition} \label{prop:indrig}
If $m_i \leq 1$, the matrices $A^{(i)}_1$ are semi-simple for all $i$ and $\langle {\bold A} \rangle$ is irreducible, then the index of rigidity is preserved by application of middle convolution, i.e. ${\rm idx}(mc _{\mu }({\bold A })) = {\rm idx}({\bold A})$ for all $\mu \in \Cplx$.
\end{proposition}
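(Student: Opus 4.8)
The plan is to compute ${\rm idx}(mc_{\mu}({\bold A})) - {\rm idx}({\bold A})$ by decomposing the index of rigidity into its local pieces via Eq.(\ref{eq:idxlidx}) and then tracking how each local index ${\rm idx}_i$ transforms under middle convolution. First I would dispose of the normalization: since the case $m_i = 0$ is absorbed into $m_i = 1$ by setting $A_1^{(i)} = 0$, it is enough to argue when $m_i = 1$ for every $i$, and then Proposition \ref{prop:relKL} fixes $\tilde n := \dim mc_{\mu}(\Cplx^n) = n(2r+1) - \sum_{i=1}^r \dim {\mathcal K}^{(i)} - \dim {\mathcal L}(\mu)$, which is the identity I will need at the end.

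Next, for a finite singular point $i \in \{1,\dots,r\}$ I would diagonalize the semisimple matrix $A_1^{(i)}$ as in Eq.(\ref{eq:A1blk}), block-decompose $A_0^{(i)}$ correspondingly, and restrict the convolution pair $(\tilde A_1^{(i)}, \tilde A_0^{(i)})$ to the quotient ${\mathcal M} = mc_{\mu}(\Cplx^n)$. Using the partial inverse $A_1^{\#}$ one conjugates this pair, by an explicit block upper-triangular matrix, into a shape where $\tilde A_1^{(i)}$ is diagonal with kernel of controlled dimension and $\tilde A_0^{(i)}$ has the diagonal blocks $A_0^{[l,l]} + 2\mu I$ for $l \geq 2$ together with a single residual block $\overline X'$ supported on the kernel of $\tilde A_1^{(i)}$. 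The decisive point is to read off the multiplicity type (conjugacy class) of $\overline X'$: irreducibility of $\langle {\bold A}\rangle$ forces $\mathrm{rank}\,\overline X'$ to equal the size of $\overline A_0^{[1,1]}$, which determines its Jordan partition, and feeding this into Eqs.(\ref{eq:dimC1C0})--(\ref{eq:locidxA1A0}) yields Eq.(\ref{eq:idximcAA}), namely ${\rm idx}_i(mc_{\mu}({\bold A})) - {\rm idx}_i({\bold A}) = 2(n-\tilde n)(2n - \dim {\mathcal K}^{(i)})$. I would then carry out the entirely parallel analysis at $i=0$; the only structural changes are that the relevant defect space is now ${\mathcal L}(\mu)$ (resp.\ ${\mathcal L}'(\mu)$) rather than ${\mathcal K}^{(i)}$ and that the residual block picks up an extra $-\mu I$ summand, giving Eq.(\ref{eq:idx0mcAA}): ${\rm idx}_0(mc_{\mu}({\bold A})) - {\rm idx}_0({\bold A}) = 2(n-\tilde n)(2n - \dim {\mathcal L}(\mu))$.

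Finally I would assemble everything. By Eq.(\ref{eq:idxlidx}) the difference of global indices equals $2\tilde n^2 - 2n^2 + \sum_{i=0}^r \{{\rm idx}_i(mc_{\mu}({\bold A})) - {\rm idx}_i({\bold A})\}$; substituting the two local formulas, factoring out $2(n-\tilde n)$, and using the identity for $\tilde n$ to rewrite $2(r+1)n - \dim {\mathcal L}(\mu) - \sum_{i=1}^r \dim {\mathcal K}^{(i)} = n + \tilde n$, the whole expression collapses to $2\tilde n^2 - 2n^2 + 2(n-\tilde n)(n+\tilde n) = 0$. I expect the genuine obstacle to lie entirely in the step of pinning down the conjugacy class of $\overline X'$ on the quotient: one must understand precisely how passing to $V^{\oplus M}/({\mathcal K} + {\mathcal L}(\mu))$ interacts with the generalized eigenspaces of $A_0^{[1,1]}$ for the eigenvalues $0$ and $-\mu$, and it is exactly here that irreducibility of $\langle {\bold A}\rangle$ is indispensable. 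Everything else is bookkeeping with the centralizer-dimension formulas recorded earlier.
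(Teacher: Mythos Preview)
Your proposal is correct and follows essentially the same route as the paper: diagonalize each semisimple $A_1^{(i)}$, conjugate the convolution pair $(\tilde A_1^{(i)},\tilde A_0^{(i)})$ on the quotient ${\mathcal M}$ by the block upper-triangular matrix built from the partial inverse $A_1^{\#}$, identify the conjugacy type of the residual block $\overline X'$ via irreducibility, derive the local increments Eqs.(\ref{eq:idximcAA}) and (\ref{eq:idx0mcAA}), and then telescope using the dimension formula for $\tilde n$. Your identification of the determination of the conjugacy class of $\overline X'$ as the only nontrivial step, and of irreducibility as the hypothesis that pins it down, matches exactly where the paper places the weight.
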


\subsection{Classification}
\begin{proposition} \label{prop:classif}
Assume that $m_i \leq 1$ $(i=0,\dots ,r)$, $A^{(i)}_1 $ are semi-simple for all $i$ and $\langle {\bold A} \rangle$ is irreducible.
We identify the case $m_i=0$ with the case $m_i=1$ and $A^{(i)}_1 =0$.\\
(i) If ${\rm idx}({\bold A}) =2$, then ${\bold A} $ is transformed to the rank one matrices by applying addition and middle convolution repeatedly.\\
(ii) If ${\rm idx}({\bold A}) =0$, then ${\bold A} $ is transformed to one of the following cases by applying middle convolution and addition repeatedly, where $d \in \Zint _{\geq 1}$.
\begin{align}
\mbox{Four singularities} : & \quad \{ (d,d) , \; (d, d), \; (d,d) , \; (d,d) \} ,\\
\mbox{Three singularities} :  & \quad \{ (d,d,d) , \; (d, d, d), \; (d,d,d) \} , \nonumber \\
& \quad \{ (2d,2d) , \; (d,d,d,d) , \; (d, d, d ,d)\} ,  \nonumber \\
& \quad \{ (3d,3d), \; (2d,2d,2d) , \; (d,d, d,d, d,d)\} , \nonumber \\
& \quad \{  (d,d) -((d), (d)), \; (d,d) , \; (d,d) \} , \nonumber 
\end{align}
\begin{align}
\mbox{Two singularities} :& \quad \{  (d,d) -((d), (d)), \; (d,d) -((d), (d)) \} ,\nonumber \\
& \quad \{ (d,d,d) -((d), (d), (d) ), \; (d,d,d)  \} ,\nonumber \\
& \quad \{ (d,d,d,d) -((d), (d), (d) ,(d)), \; (2d,2d) \} , \nonumber \\
& \quad \{ (2d,2d) -((d,d), (d,d)), \; (d,d,d,d) \} , \nonumber \\
& \quad \{ (3d,2d) -((d,d,d), (2d)), \; (d,d,d,d,d) \} , \nonumber \\
& \quad \{ (2d,2d,2d) -((d,d), (d,d), (d,d)), \; (3d,3d) \} , \nonumber \\
& \quad \{ (3d,3d,2d) -((d,d,d), (d,d,d), (2d)), \; (4d,4d) \} , \nonumber \\
& \quad \{ (5d,4d,3d) -((d,d,d,d,d), (2d,2d), (3d)), \; (6d,6d) \} , \nonumber \\
& \quad \{ (5d,4d) -((d,d,d,d,d), (2d,2d)), \; (3d,3d,3d) \} , \nonumber \\
& \quad \{ (3d,3d) -((d,d,d), (d,d,d)), \; (2d,2d,2d) \} ,  \nonumber \\
& \quad \{ (5d,3d) -((d,d,d,d,d), (3d)), \; (2d,2d,2d,2d)  \} ,\nonumber \\
& \quad \{ (4d,3d) -((2d,2d), (3d)), \; (d,d,d,d,d,d,d) \} . \nonumber 
\end{align}
\end{proposition}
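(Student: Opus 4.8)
The plan is to prove both parts by iterating a single \emph{reduction step}. Given an irreducible tuple ${\bold A}$ as in the proposition with $n=\dim V\geq 2$, one first applies an addition $M_{\overline{\mu}}$ to normalise the spectra at the singular points and then applies a middle convolution $mc_{\mu}$ with a suitably chosen $\mu\in\Cplx\setminus\{0\}$. By Proposition~\ref{prop:indrig} the index of rigidity is unchanged, by the irreducibility statement of Section~\ref{sec:mc} the tuple $mc_{\mu}({\bold A})$ is again irreducible, and it still has $m_i\leq 1$ with semisimple leading coefficients --- the latter visible from the diagonalisation of $\tilde{A}_1^{(i)}|_{{\mathcal M}}$ carried out in the Middle convolution subsection. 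Since $mc_{\mu}({\bold A})$ has size $\tilde{n}=(2r+1)n-\sum_{i=1}^r\dim{\mathcal K}^{(i)}-\dim{\mathcal L}(\mu)$, the reduction step strictly decreases the size precisely when $\sum_{i=1}^r\dim{\mathcal K}^{(i)}+\dim{\mathcal L}(\mu)>2rn$.

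The heart of the argument is a lower bound for this quantity. By the Subspace subsection, after a suitable preliminary addition $\dim{\mathcal K}^{(i)}$ equals $n^{(i)}_1+\dim{\rm Ker}(A_0^{[1,1](i)})$, and since a scalar addition can move any eigenvalue of $A_1^{(i)}$ to $0$ and any eigenvalue of the corresponding diagonal block of $A_0^{(i)}$ to $0$, one can arrange $\dim{\mathcal K}^{(i)}=D_i:=\max_l\bigl(n^{(i)}_l+n^{(i)}_{l,1}\bigr)$ for $i=1,\dots,r$, in the notation for the type of multiplicities of $(A_1^{(i)},A_0^{(i)})$ (with $A_1^{(i)}=0$ when $m_i=0$); applying the same analysis to the pair $(A_1^{(0)},A_0^{(0)}-\mu I_n)$ one can choose $\mu$ so that $\dim{\mathcal L}(\mu)=D_0:=\max_l\bigl(n^{(0)}_l+n^{(0)}_{l,1}\bigr)$. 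One then verifies the elementary inequality
\[
p^{(i)}:=\sum_l\Bigl((n^{(i)}_l)^2+\sum_{j}(n^{(i)}_{l,j})^2\Bigr)\;\leq\;n\,D_i,
\]
with equality if and only if, for every $l$, the multiplicities $n^{(i)}_{l,j}$ are all equal and $n^{(i)}_l+n^{(i)}_{l,1}$ does not depend on $l$. Since ${\rm idx}({\bold A})=\sum_{i=0}^r p^{(i)}-2rn^2$ by~(\ref{eq:idxrig}), summing over $i$ yields $\sum_{i=0}^r D_i\geq 2rn+{\rm idx}({\bold A})/n$. Consequently the reduction step strictly decreases the size whenever ${\rm idx}({\bold A})>0$, and when ${\rm idx}({\bold A})=0$ it does so unless equality holds in the displayed inequality for every $i$.

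In case~(i) we have ${\rm idx}({\bold A})=2>0$, so repeated application of the reduction step strictly decreases the positive integer $n$ and must terminate at $n=1$, the rank one case, which proves~(i). In case~(ii), the value $n=1$ would force ${\rm idx}({\bold A})=2$, so the iteration instead terminates at some $n\geq 2$ at which no reduction decreases the size, i.e.\ at which $p^{(i)}=nD_i$ for all $i$. By the equality case above, this means that at each singular point $A_1^{(i)}$ has eigenspaces of dimensions $n^{(i)}_1,\dots,n^{(i)}_{k^{(i)}}$, on the $l$-th of which $A_0^{(i)}$ is conjugate to a rectangular $L({\bold q};\underline{\lambda})$ with $q^{(i)}_l$ parts all of size $m^{(i)}_l$, where $n^{(i)}_l=q^{(i)}_l m^{(i)}_l$ and $(q^{(i)}_l+1)m^{(i)}_l=D_i$. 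Substituting this into ${\rm idx}({\bold A})=0$ reduces it to the single equation $\sum_{i=0}^r\rho_i=2r$ with $\rho_i=D_i/n=\bigl(k^{(i)}-\sum_l 1/(q^{(i)}_l+1)\bigr)^{-1}$; together with the requirements that the $m^{(i)}_l$, the $D_i$ and $n$ be mutually consistent integers and that no singular point be apparent, its solutions are exactly the patterns in the statement, $d$ being the common scaling parameter. It remains to check that each listed pattern is realised by an irreducible ${\bold A}$ and is genuinely terminal; the sub-cases with only regular singularities reproduce the ${\rm idx}=0$ classification of Kostov~\cite{Kos} and Oshima~\cite{O} (the patterns of type $\widetilde{D}_4$, $\widetilde{E}_6$, $\widetilde{E}_7$, $\widetilde{E}_8$), and the genuinely irregular patterns come out of the same bookkeeping.

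The main obstacle is twofold. First, in the reduction step one must guarantee the existence of a value $\mu\neq 0$ which realises the bound on $\dim{\mathcal L}(\mu)$ --- or at least keeps $\sum_{i=1}^r\dim{\mathcal K}^{(i)}+\dim{\mathcal L}(\mu)>2rn$ --- \emph{and} preserves irreducibility: the optimal $\mu$ is essentially forced, so one must exclude, by Dettweiler--Reiter type arguments adapted to the irregular blocks, that it is a resonant value, while keeping track of the effect of the preliminary addition and of the constraint $A_0^{(0)}=-(A_0^{(1)}+\dots+A_0^{(r)})$. Second, and more seriously, the enumeration in~(ii) requires proving that $\sum_{i=0}^r\rho_i=2r$ under the integrality constraints has \emph{exactly} the listed solution set --- this is the combinatorial core, and the irregular cases genuinely extend Oshima's list --- and, for each solution, exhibiting a corresponding irreducible tuple, i.e.\ solving the relevant irregular analogue of the Deligne--Simpson existence problem.
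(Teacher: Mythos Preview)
Your overall strategy --- the reduction step, the inequality $p^{(i)}\leq nD_i$, and the analysis of the equality case --- is correct and is essentially the paper's own argument; your direct derivation of $p^{(i)}\leq nD_i$ is in fact slightly cleaner than the paper's averaging argument that produces a good index $l^{(i)}$.

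However, two of your stated obstacles are misplaced. First, ruling out $\mu=0$ requires no Dettweiler--Reiter resonance analysis: after the preliminary addition, if the optimal $\mu$ were $0$ one would have $\dim{\mathcal K}+\dim{\mathcal L}'(0)=\sum_{i=0}^r D_i>2rn=n(M-1)$, contradicting Proposition~\ref{prop:relKL}(iii) directly. The constraint $A_0^{(0)}=-\sum_{i\geq 1}A_0^{(i)}$ causes no difficulty because one never shifts $A_0^{(0)}$ independently; one simply chooses $\mu$ to match the eigenvalue of $A_0^{[1,1](0)}$ that results after the additions at $i\geq 1$. Second --- and this is a genuine misreading of the statement --- the proposition does \emph{not} assert that every listed pattern is realised by an irreducible tuple. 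It says only that any irreducible ${\bold A}$ with ${\rm idx}({\bold A})=0$ is transformed \emph{to one of} the listed patterns; the paper remarks immediately after the proof that some patterns (the Fuchsian ones for $d\geq 2$) are \emph{not} realised. So the ``irregular Deligne--Simpson existence'' step you flag as the more serious obstacle is simply not part of what must be proved.

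What genuinely remains is the combinatorial enumeration: solving $\sum_{i=0}^r(2-D_i/n)=2$ under the constraints $n^{(i)}_{l,j}=m^{(i)}_l$ for all $j$ and $(q^{(i)}_l+1)m^{(i)}_l=D_i$ for all $l$. The paper carries this out by a direct case split on $r$ and on $\#\{i:k^{(i)}\geq 2\}$, reducing each branch to a small Egyptian-fraction problem (for instance $1/(p^{(0)}_1+1)+1/(p^{(0)}_2+1)=1/2$ in one of the three-block cases). This is where the actual work lies, and your proposal leaves it entirely unwritten.
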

\begin{proof}
(i) It is enough to show that the size of matrices can be decreased by appropriate application of addition and middle convolution, because the size of matrices is reduced to one by applying many times.

Let 
\begin{align}
& (n^{(i)}_1, n^{(i)}_2, \dots ,n^{(i)}_{k^{(i)}}) - \\
& ((n^{(i)}_{1,1}, \dots ,n^{(i)}_{1,p^{(i)}_1}) , (n^{(i)}_{2,1},\dots ,n^{(i)}_{2,p_2^{(i)}}), \dots , (n^{(i)}_{k^{(i)},1},\dots ,n^{(i)}_{k^{(i)},p^{(i)}_{k^{(i)}}})) ,\nonumber
\end{align}
$ ( n^{(i)}_1 \geq n^{(i)}_2 \geq \dots \geq n^{(i)}_{k^{(i)}}$, $n^{(i)}_{j,1}\geq \dots \geq n^{(i)}_{j,p_j^{(i)}} )$ be the type of multiplicities of $(A^{(i)}_1, A^{(i)}_0)$.
Note that  $ n^{(i)}_{j,1} + \dots + n^{(i)}_{j,p_j^{(i)}}= n^{(i)}_j$.
Then
\begin{equation}
(n^{(i)}_{j})^2 + \sum _{j'} (n^{(i)}_{j,j'})^2 \leq (n^{(i)}_{j})^2 + n^{(i)}_{j,1} \sum _{j'} n^{(i)}_{j,j'} = n^{(i)}_{j} (n^{(i)}_{j} +n^{(i)}_{j,1}).
\label{eq:ineq01}
\end{equation}
There exists a number $l^{(i)}$ such that 
\begin{equation}
\frac{n}{n^{(i)}_{l^{(i)}}} \left( (n^{(i)}_{l^{(i)}})^2 +\sum _{j} (n^{(i)}_{l^{(i)},j})^2 \right)  \geq \sum _j (n^{(i)}_{j})^2 +  \sum _j \sum _{j'}  (n^{(i)}_{j,j'})^2,
\label{eq:nili2ineq}
\end{equation}
because if not we have contradiction as
\begin{align}
& \sum _{l} \left( (n^{(i)}_{l})^2 +  \sum _{j'}  (n^{(i)}_{l,j'})^2  \right) 
 \\
& < \sum _{l} \frac{n^{(i)}_{l}}{n} \left\{ \sum _j (n^{(i)}_{j})^2 +  \sum _j \sum _{j'}  (n^{(i)}_{j,j'})^2 \right\} 
= \sum _j (n^{(i)}_{j})^2 +  \sum _j \sum _{j'}  (n^{(i)}_{j,j'})^2. \nonumber
\end{align}
By combining with Eq.(\ref{eq:ineq01}),
we have 
\begin{equation}
n(n^{(i)}_{l^{(i)}} +n^{(i)}_{l^{(i)},1}) \geq \sum _j (n^{(i)}_{j})^2 +  \sum _j \sum _{j'}  (n^{(i)}_{j,j'})^2 .
\label{eq:ineqnnili}
\end{equation}

Recall that the index of rigidity is calculated as 
\begin{align}
& {\rm idx}({\bold A}) = \sum _{i=0}^r \left( \sum _j (n^{(i)}_{j})^2 +  \sum _j \sum _{j'}  (n^{(i)}_{j,j'})^2 \right) - 2rn^2 .
\label{eq:idxrig0}
\end{align}
If $\sum _{i=0}^r n^{(i)}_{l^{(i)}} +n^{(i)}_{l^{(i)},1} \leq 2rn$, then 
\begin{align}
& \sum _{i=0}^r \left( \sum _j (n^{(i)}_{j})^2 +  \sum _j \sum _{j'}  (n^{(i)}_{j,j'})^2 \right) 
\leq \sum _{i=0}^r n(n^{(i)}_{l^{(i)}} +n^{(i)}_{l^{(i)},1}) \leq 2 r n^2 , \label{eq:ineqidx}
\end{align}
which contradicts to $ {\rm idx}({\bold A}) =2$.
Hence 
\begin{equation}
\sum _{i=0}^r ( n^{(i)}_{l^{(i)}} +n^{(i)}_{l^{(i)},1} ) \geq 2rn +1.
\label{eq:idx2rn+1}
\end{equation}

We apply addition in order that $\dim {\mathcal K}^{(i)} = n^{(i)}_{l^{(i)}}+n^{(i)}_{l^{(i)},1} $ for $i=1,\dots ,r$
and the dimension of the kernel of $A^{(0)}_1$ is $n^{(0)}_{l^{(0)}}$.
Let $\mu $ be the value such that $\dim {\mathcal L} (\mu ) = n^{(0)}_{l^{(0)}}+n^{(0)}_{l^{(0)},1} $.
If $\mu =0$, then it follows from Eq.(\ref{eq:idx2rn+1}) that $\dim {\mathcal K} + \dim {\mathcal L} ' (0) - n ( M-1)= \sum _{i=0}^r (n^{(i)}_{l^{(i)}} +n^{(i)}_{l^{(i)},1} ) -2nr < 0$ and it contradicts to Proposition \ref{prop:relKL} (iii).
Thus $\mu \neq 0$, the size of matrices obtained by middle convolution is
\begin{equation}
(2r+1) n - \sum _{i=0}^r (n^{(i)}_{l^{(i)}} +n^{(i)}_{l^{(i)},1} ) ,
\label{eq:sizem}
\end{equation}
and it is no more than $n-1$, which follows from Eq.(\ref{eq:idx2rn+1}).
Hence the size can be decreased by addition and middle convolution.

(ii) 
It is sufficient to consider the case that the size of matrices cannot be decreased by addition and middle convolution.
Let ${l^{(i)}}$ be the number which satisfies Eq.(\ref{eq:nili2ineq}).
If $\sum _{i=0}^r (n^{(i)}_{l^{(i)}} +n^{(i)}_{l^{(i)},1} ) > 2rn$, then the size is decreased by middle convolution (see Eq.(\ref{eq:sizem})) or the system is reducible (the case $\mu =0 $).
Hence $\sum _{i=0}^r (n^{(i)}_{l^{(i)}} +n^{(i)}_{l^{(i)},1} ) \leq 2rn$.
If $\sum _{i=0}^r (n^{(i)}_{l^{(i)}} +n^{(i)}_{l^{(i)},1} ) < 2rn$, then we have $\sum _{i=0}^r \left( \sum _j (n^{(i)}_{j})^2 +  \sum _j \sum _{j'}  (n^{(i)}_{j,j'})^2 \right) < 2 r n^2 $ as Eq.(\ref{eq:ineqidx}) and it contradicts to $ {\rm idx}({\bold A}) =0$.
Thus $\sum _{i=0}^r (n^{(i)}_{l^{(i)}} +n^{(i)}_{l^{(i)},1} ) = 2rn$.
Since $ {\rm idx}({\bold A}) =0$, all inequalities in Eqs.(\ref{eq:ineqnnili}), (\ref{eq:ineqidx}) are equalities.
In particular we have 
$(n^{(i)}_{l^{(i)}})^2 + \sum _{j'} (n^{(i)}_{l^{(i)},j'})^2 = n^{(i)}_{l^{(i)}}(n^{(i)}_{l^{(i)}} +n^{(i)}_{l^{(i)},1}) $, which leads to $n^{(i)}_{l^{(i)},j'}  =n^{(i)}_{l^{(i)},1}$ for $1\leq j'\leq p^{(i)} _{l^{(i)}}$.
Then $((n^{(i)}_{l^{(i)}})^2 +\sum _{j} (n^{(i)}_{l^{(i)},j})^2 ) n/ n^{(i)}_{l^{(i)}} = n (n^{(i)}_{l^{(i)}} +n^{(i)}_{l^{(i)},1} ) = \sum _j (n^{(i)}_{j})^2 +  \sum _j \sum _{j'}  (n^{(i)}_{j,j'})^2 $.
Hence if Eq.(\ref{eq:nili2ineq}) is satisfied, then the inequality in Eq.(\ref{eq:nili2ineq}) is replaced by equality.
Therefore $n((n^{(i)}_{l})^2 +\sum _{j} (n^{(i)}_{l,j})^2 ) \leq n^{(i)}_{l} (\sum _j (n^{(i)}_{j})^2 +  \sum _j \sum _{j'}  (n^{(i)}_{j,j'})^2) $ for all $l$.
It follows from summing up with respect to $l$ that $((n^{(i)}_{l})^2 +\sum _{j} (n^{(i)}_{l,j})^2 ) n/ n^{(i)}_{l} = \sum _j (n^{(i)}_{j})^2 +  \sum _j \sum _{j'}  (n^{(i)}_{j,j'})^2 $ for all $l$.
By setting $l=l^{(i)}$ and repeating the discussion above, we have
\begin{align}
& n^{(i)}_{l,j}  =n^{(i)}_{l,1} = \frac{n^{(i)}_l}{ p^{(i)}_l} , \quad n^{(i)}_{l,1} ( p^{(i)}_l+ 1)= n^{(i)}_{1,1} ( p^{(i)}_1+ 1) ,
\end{align}
for all $j \in \{ 1,\dots , p^{(i)}_l\} $ and $l \in \{ 1,\dots ,k^{(i)} \}$.
Since $\sum _{i=0}^r (n^{(i)}_{1} +n^{(i)}_{1,1} ) = 2rn$, we have 
\begin{equation}
\sum _{i=0}^r \left( 2- \frac{n^{(i)}_{1} +n^{(i)}_{1,1}}{n} \right) =2.
\label{eq:2-nn2}
\end{equation}
Hence $r \geq 1$.

If $n^{(i)}_{1}=n$ and $p^{(i)}_{1}=1$, then
the matrices $A^{(i)}_1$ and $A^{(i)}_0$ are scalar, and we may omit the singularity corresponding to $A^{(i)}_1$ and $A^{(i)}_0$, because they are transformed to $A^{(i)}_1 =A^{(i)}_0 =0$ by applying addition.
If $n^{(i)}_{1}=n$ and $p^{(i)}_{1}\geq 2$, then we have $k ^{(i)} = 1$, $ n^{(i)}_{1,1} = \dots = n^{(i)}_{1,p^{(i)}_{1}} $ and $2- (n^{(i)}_{1} +n^{(i)}_{1,1})/{n} = 1-1/p^{(i)}_{1}$.
Thus $1/2\leq 2- (n^{(i)}_{1} +n^{(i)}_{1,1})/{n} <1$ and the equality holds iff $p^{(i)}_{1} =2$.
Since the matrix $A^{(i)}_1$ is scalar, this case can be regarded as $m_i=0$ by applying addition.

If $n^{(i)}_{1} \neq n$, then $k ^{(i)} \geq 2$ and we have $n= n^{(i)}_{1} +\dots + n^{(i)}_{k ^{(i)} } \geq k ^{(i)} n^{(i)}_{k ^{(i)} }$, $n^{(i)}_{k ^{(i)} ,1} \leq n^{(i)}_{k ^{(i)} }$ and
\begin{equation}
2- \frac{n^{(i)}_{1} +n^{(i)}_{1,1}}{n} = 2- \frac{n^{(i)}_{k ^{(i)}} +n^{(i)}_{k ^{(i)},1}}{n} \geq 2- \frac{2n^{(i)}_{k ^{(i)}}}{n} \geq 2 -\frac{2}{k ^{(i)}} .
\label{eq:mi1ineq}
\end{equation}
Hence we have $2- (n^{(i)}_{1} +n^{(i)}_{1,1})/{n} \geq 1$ and the equality holds iff $k ^{(i)} =2$, $p ^{(i)}_1 = p ^{(i)} _2 =1$ and $n^{(i)}_{1} =n^{(i)}_2 =n/2$.

We consider the case $k ^{(i)}=1$ for all $i$.
Then $\sum _{i=0}^r (1-1/p^{(i)}_{1}) =2$ and we have solution for the cases $r=3$ and $r=2$.
If $r=3$, then $p^{(0)}_{1} = p^{(1)}_{1}=p^{(2)}_{1}=p^{(3)}_{1}=2$, i.e. the case $\{ (d ,d)$, $(d,d)$, $(d,d)$, $(d,d) \}$ $(d=n/2)$.
If $r=2$, then $(p^{(0)}_{1}, p^{(1)}_{1}, p^{(2)}_{1})= (3,3,3) , (2,4,4), (2,3,6)$ or their permutations, i.e. the cases $\{ (d ,d, d), (d,d,d) , (d,d,d) \}$ $(d=n/3)$, $\{ (2d ,2d), (d,d,d,d) , (d,d,d,d) \}$ $(d=n/4)$ or $ \{ (3d ,3d), (2d,2d,2d) , (d,d,d,d,d,d) \}$ $(d=n/6)$.

We consider the case $\# \{ i \: | \: k ^{(i)} \geq 2\} \geq 2$.
Then it follows from Eq.(\ref{eq:mi1ineq}) that $\# \{ i \: | \: k ^{(i)} \geq 2 \} = 2 $, $k ^{(0)} = k ^{(1)} = 2$, $p ^{(i)}_1 = p ^{(i)} _2 =1$ and $n^{(i)}_{1} =n^{(i)}_2 =n/2$ $(i=0,1) $.
Hence we obtain the case $\{ (d ,d)- ((d),(d)) ,  \;  (d ,d)- ((d),(d)) \}$ $(d=n/2)$.

We consider the case $\# \{ i \: | \: k ^{(i)} \geq 2 \} =1$ and $r\geq 2$.
We set $k ^{(0)} \geq 2 $, $k ^{(1)} =\dots = k ^{(r)} =1$ for simplicity.
It follows from Eq.(\ref{eq:2-nn2}), $2- (n^{(0)}_{1} +n^{(0)}_{1,1})/{n} \geq 1 $ and $2- (n +n^{(i)}_{1,1})/{n} \geq 1/2 $ $(i\geq 1)$ that $r=2$, $n^{(0)}_{1} =n^{(0)}_2 =n/2$, $p^{(1)}_{1} =p^{(2)}_{1} = 2$ and $n^{(i)}_{1,1}= n^{(i)}_{1,2}
=n/2$ $(i=1,2)$. 
It corresponds to the case $\{ (d ,d)- ((d),(d)) ,  \;  (d ,d), \;  (d,d) \}$ $(d=n/2)$.

The remaining case is $\# \{ i \: | \: k ^{(i)} \geq 2 \} =1$ and $r= 1$.
We set  $k ^{(0)} \geq 2 $, $k ^{(1)} =1$ for simplicity.
By Eq.(\ref{eq:2-nn2}), we have 
\begin{equation}
\frac{n^{(0)}_{1} +n^{(0)}_{1,1}}{n} =1- \frac{1}{p^{(1)}_{1}}.
\end{equation}
It follows from $p^{(1)}_{1} \geq 2$ and Eq.(\ref{eq:mi1ineq}) that $2- (n^{(0)}_{1} +n^{(0)}_{1,1})/{n} \leq 3/2$ and $k^{(0)} \leq 4$.

If $k^{(0)} = 4$, then inequalities in Eq.(\ref{eq:mi1ineq}) must be equalities, $p ^{(0)}_1 = p ^{(0)} _2 =p ^{(0)}_3 = p ^{(0)} _4 =1$ and $n^{(0)}_{1} =n^{(0)}_2 =n^{(0)}_{3} =n^{(0)}_4 =n/4 $.
We have $p^{(1)}_{1} =2$ and $n^{(1)}_{1} =n^{(1)}_2 =n/2$.
Hence we obtain the case $\{ (d ,d,d,d)- ((d),(d),(d),(d)) ,  \;  (2d ,2d) \}$ $(d=n/4)$.

If $k^{(0)} = 3$, then we have $(n^{(0)}_{1} +n^{(0)}_{1,1})/{n} \leq 2/3 $.
Since $(n^{(0)}_{1} +n^{(0)}_{1,1})/{n} + 1/p^{(1)}_{1} =1$, we have $p^{(1)}_{1} =3 $ or $2$, i.e. $(n^{(0)}_{1} +n^{(0)}_{1,1})/{n} = 2/3$ or $1/2$.
If $(n^{(0)}_{1} +n^{(0)}_{1,1})/{n} = 2/3$, then $p^{(0)}_{1} =  p ^{(0)} _2 =p ^{(0)}_3 = 1$, $n^{(0)}_{i} =n^{(0)}_{i,1}=n/3$ $(i=1,2,3)$, $p^{(1)}_{1} =3$ and $n^{(1)}_{1} =n^{(1)}_2 =n^{(1)}_3 =n/3$, i.e. the case $\{ (d ,d,d)- ((d),(d),(d)) , \;  (d,d,d) \}$ $(d=n/3)$.
If $(n^{(0)}_{1} +n^{(0)}_{1,1})/{n} = 1/2$, then we have
\begin{align}
& 2n^{(0)}_{1,1} (p^{(0)}_1 +1)= 2n^{(0)}_{2,1} (p^{(0)}_2 +1)= 2n^{(0)}_{3,1} (p^{(0)}_3 +1)= n \\
& = n^{(0)}_{1} +n^{(0)}_{2} +n^{(0)}_{3}= n^{(0)}_{1,1}p^{(0)}_1 +n^{(0)}_{2,1}p^{(0)}_2 +n^{(0)}_{3,1}p^{(0)}_3 . \nonumber
\end{align}
By summing up, we have $2( n^{(0)}_{1,1} +n^{(0)}_{2,1} +n^{(0)}_{3,1} )=n^{(0)}_{1,1}p^{(0)}_1 +n^{(0)}_{2,1}p^{(0)}_2 +n^{(0)}_{3,1}p^{(0)}_3 $.
Since $n^{(0)}_{1} \geq n^{(0)}_{2} \geq n^{(0)}_{3}$, we have $n^{(0)}_{1,1} \leq n^{(0)}_{2,1} \leq n^{(0)}_{3,1}$, $p^{(0)}_1 \geq p^{(0)}_2 \geq p^{(0)}_3$ and $p^{(0)}_3 \leq 2$.
If $p^{(0)}_3=2$, then $p^{(0)}_1=p^{(0)}_2=2$.
If $p^{(0)}_3=1$, then we have $2n^{(0)}_{3,1}=n^{(0)}_{1,1}(p^{(0)}_1+1)=n^{(0)}_{2,1}(p^{(0)}_2+1)$ and $3 n^{(0)}_{3,1} = n^{(0)}_{1,1}p^{(0)}_1 +n^{(0)}_{2,1}p^{(0)}_2$.
Hence $n^{(0)}_{3,1} =n^{(0)}_{1,1} +n^{(0)}_{2,1}$
 and $1/(p^{(0)}_1 +1) +1/(p^{(0)}_2 +1) =1/2$.
Therefore $(p^{(0)}_1+1, p^{(0)}_2 +1, p^{(0)}_3+1)=(3,3,3) $, $(6,3,2)$ or $(4,4,2)$.
If $(p^{(0)}_1+1, p^{(0)}_2 +1, p^{(0)}_3+1)=(3,3,3) $, then $n^{(0)}_{1,1} =n^{(0)}_{2,1} =n^{(0)}_{3,1} =n/3$ and we obtain the case $\{ (2d,2d,2d) -((d,d), (d,d), (d,d)), \; (3d,3d) \}$ $(d=n/6 )$.
If $(p^{(0)}_1+1, p^{(0)}_2 +1, p^{(0)}_3+1)=(6,3,2) $, $n^{(0)}_{1,1} =n/12$, $n^{(0)}_{2,1} = n/6$ and $n^{(0)}_{3,1} =n/4$ and we obtain the case $\{ (5d,4d,3d) -((d,d,d,d,d), (2d,2d), (3d)), \; (6d,6d) \}$ $(d=n/12)$.
If $(p^{(0)}_1+1, p^{(0)}_2 +1, p^{(0)}_3+1)=(4,4,2) $, then $n^{(0)}_{1,1} =n^{(0)}_{2,1} =n/8$, $n^{(0)}_{3,1} =n/4$, and we obtain the case $\{ (3d,3d,2d) -((d,d,d), (d,d,d), (2d)), \; (4d,4d) \}$ $(d=n/8)$.

We investigate the case $k^{(0)} = 2$.
It follows from Eq.(\ref{eq:2-nn2}) and $n= n^{(0)}_{1,1}p^{(0)}_1 +n^{(0)}_{2,1}p^{(0)}_2 $ that
\begin{align}
& n^{(0)}_{1,1} (p^{(0)}_1 +1)= n^{(0)}_{2,1} (p^{(0)}_2 +1)= \left( 1-\frac{1}{p^{(1)}_1} \right) (n^{(0)}_{1,1}p^{(0)}_1 +n^{(0)}_{2,1}p^{(0)}_2) .
\end{align}
By erasing the term $p^{(0)}_1$, we obtain  
\begin{equation}
n^{(0)}_{2,1} +(p^{(1)}_1 -1) n^{(0)}_{1,1} = (p^{(1)}_1 -2) n^{(0)}_{2,1} p^{(0)}_2.
\end{equation}
Hence $p^{(1)}_1 \geq 3$.
It follows from $n^{(0)}_{1} \geq n^{(0)}_{2}$ that $n^{(0)}_{1,1} \leq n^{(0)}_{2,1}$, $p^{(0)}_1 \geq p^{(0)}_2$, $p^{(1)}_1 \geq 1+(p^{(1)}_1 -1)n^{(0)}_{1,1}/n^{(0)}_{2,1} = (p^{(1)}_1 -2)p^{(0)}_2$ and $p^{(0)}_2 \leq p^{(1)}_1 /(p^{(1)}_1 -2) $.
We consider the case $p^{(1)}_1 = 3$.
Then $p^{(0)}_2 \leq 3 $.
If $p^{(0)}_2=1$, then $n^{(0)}_{1,1}=0$ and it cannot occur.
If $p^{(0)}_2=2$, then $n^{(0)}_{2,1}=2n^{(0)}_{1,1}$ and $p^{(0)}_1=5$.
It corresponds to the case $\{ (5d,4d) -((d,d,d,d,d), (2d,2d)), \; (3d,3d,3d) \}$ $(d=n/9)$.
If $p^{(0)}_2=3$, then $n^{(0)}_{1,1}=n^{(0)}_{2,1}$ and $p^{(0)}_1=3$.
It corresponds to the case $\{ (3d,3d) -((d,d,d), (d,d,d)), \; (2d,2d,2d) \}$ $(d=n/6 )$.
We consider the case $p^{(1)}_1 \geq 4$.
Then $p^{(0)}_2 \leq p^{(1)}_1 /(p^{(1)}_1 -2) \leq 2$.
If $p^{(0)}_2=2$, then $p^{(1)}_1 =4$, $n^{(0)}_{1,1}=n^{(0)}_{2,1}$ and $p^{(0)}_1=2$.
It corresponds to the case $\{ (2d,2d) -((d,d), (d,d)), \; (d,d,d,d) \}$ $(d=n/4)$.
If $p^{(0)}_2 =1$, then $n^{(0)}_{1,1}=n^{(0)}_{2,1} (p^{(1)}_1 -3) /(p^{(1)}_1 -1) $ and $p^{(0)}_1 = 1+4/(p^{(1)}_1 -3) $.
Hence $4$ is divisible by $p^{(1)}_1 -3 $ and we have $p^{(1)}_1 =4,5$ or $7$.
If $p^{(1)}_1  =4$, then $p^{(0)}_1=5$ and  $3n^{(0)}_{1,1}=n^{(0)}_{2,1}$.
It corresponds to the case $\{ (5d,3d) -( (d,d,d,d,d),(3d)), \; (2d,2d,2d,2d) \} $ $(d=n/8)$.
If $p^{(1)}_1  =5$, then $p^{(0)}_1=3$ and  $2n^{(0)}_{1,1}=n^{(0)}_{2,1}$.
It corresponds to the case $\{ (3d,2d) -((d,d,d),(2d)), \; (d,d,d,d,d) \} $ $(d=n/5)$.
If $p^{(1)}_1  =7$, then $p^{(0)}_1=2$ and  $3n^{(0)}_{1,1}=2n^{(0)}_{2,1}$.
It corresponds to the case $\{ (4d,3d) -((2d,2d),(3d)), \; (d,d,d,d,d,d,d) \} $ $(d=n/7)$.

Thus we have exhausted all the cases.
\end{proof}
Remark that some patterns in the list of Proposition \ref{prop:classif} may not be realized as an irreducible system of differential equations.
In fact the patterns corresponding to Fuchsian systems as
\begin{align}
 & \quad \{ (d,d) , \; (d, d), \; (d,d) , \; (d,d) \} , \\
 & \quad \{ (d,d,d) , \; (d, d, d), \; (d,d,d) \},  \nonumber \\
 & \quad \{ (2d,2d) , \; (d,d,d,d) , \; (d, d, d ,d) \} ,  \nonumber \\
 & \quad \{ (3d,3d), \; (2d,2d,2d) , \; (d,d, d,d, d,d)\} , \nonumber 
\end{align}
for the case $d\geq 2$ (resp. $d=1$) cannot be realized (resp. can be realized) as irreducible systems, which was established by Kostov \cite{Kos} and Crawley-Boevey \cite{CB}.
\section{Concluding remarks} \label{sec:future}

We give comments for future reference.

In this paper we gave a tentative definition of the index of rigidity for systems of linear differential equations which may have irregular singularities, and we should clarify the correctness (or incorrectness) of our definition.
A key point would be Conjecture \ref{con:indrig}, which is compatible with Proposition \ref{prop:indrig}, and we should consider the case that the coefficient matrices are not semi-simple.

Crawley-Boevey \cite{CB} made a correspondence between systems of Fuchsian differential equations and roots of Kac-Moody root systems, which was applied for solving additive Deligne-Simpson problem.
Boalch \cite{Boa} gave a generalization of Crawley-Boevey's work to the cases which include an irregular singularity.
It would be hopeful to develop studies on this direction to understand several properties of differential equations.

Laplace transformation (or Fourier transformation) has been a powerful tool for analysis of differential equations.
It is known that Okubo normal form fits well with Laplace transformation.
In fact, Okubo normal form 
\begin{equation}
(xI_n - T) \frac{d\Psi }{dx} =A \Psi ,
\label{eq:okubo1}
\end{equation}
is transformed to
\begin{equation}
\frac{dV }{dz} =\left( T - \frac{A+I_n}{z} \right) V ,
\label{eq:Birkhoff}
\end{equation}
which is Birkhoff canonical form of Poincar\'e rank one by Laplace transformation 
\begin{equation}
V(z)= \int_C \exp (z x) \Psi (x) dx .
\end{equation}
Assume that the matrices $T$ and $A$ are written as
\begin{align}
& 
T= 
\left.
\left( 
\begin{array}{cccc}
t_1 I_{n_1} & 0 & \dots & 0  \\
0 & t_2 I_{n_2} & \dots & 0  \\
0 & 0 & \ddots & \vdots \\
0 & \cdots  & 0 & t_{k} I_{n_{k}} 
\end{array}
\right)
\right. , \;
 A  = 
\left.
\left( 
\begin{array}{cccc}
A^{[1,1]} & A^{[1,2]} & \dots & A^{[1,k]}   \\
A^{[2,1]} & A^{[2,2]} & \dots & A^{[2,k]}   \\
\vdots & \vdots & \ddots & \vdots \\
A^{[k,1]} & A^{[k,2]}& \dots & A^{[k,k]} 
\end{array}
\right)
\right. ,
\label{eq:Tblk}
\end{align}
where $t_i \neq t_j \; (i\neq j)$ and $A^{[i,j]}$ is a $n_i \times n_j$ matrix, and we further assume that $A$, $ A^{[i,i]}$ $(i=1, \dots ,k)$ are semi-simple and Eq.(\ref{eq:okubo1}) is irreducible.
Then the index of rigidity for Okubo normal form (Eq.(\ref{eq:okubo1})) is equal to
\begin{align}
\sum _{j=1}^k \left( (n_{j})^2 + \dim (Z(A^{[j,j]})) \right) + \dim (Z(A))  - n^2 
, \label{eq:idxOB}
\end{align}
which was described by Haraoka \cite{Har} and Yokoyama \cite{Yok}.
On the other hand, the index of rigidity of Eq.(\ref{eq:Birkhoff}) can be calculated as a special case of section \ref{sec:mileq1} (see Eq.(\ref{eq:idxrig})), and it is equal to Eq.(\ref{eq:idxOB}).
Hence the index of rigidity in this paper fits well with Laplace transformation.

We now observe an example of Laplace transformation.
Set
\begin{equation}
T = \left(
\begin{array}{cc}
0 & 1 \\
0 & 0  
\end{array}
\right) , \quad A = - \left(
\begin{array}{cc}
a_{1,1} +1 & a_{1,2} \\
a_{2,1} & a_{2,2} +1  
\end{array}
\right) ,
\label{eq:Anilp0}
\end{equation}
in Eq.(\ref{eq:Birkhoff}), which corresponds to Eq.(\ref{eq:Anilp}).
It follows from irreducibility that $a_{2,1} \neq 0$.
By (inverse) Laplace transformation, we obtain Eq.(\ref{eq:okubo1}), which is rewritten as
\begin{equation}
 \frac{d\Psi }{dx} =\left\{
- \frac{1}{x^2}
\left(
\begin{array}{cc}
a_{2,1} & a_{2,2} +1 \\
0 & 0
\end{array}
\right)
- \frac{1}{x}
\left(
\begin{array}{cc}
a_{1,1} +1 & a_{1,2} \\
a_{2,1} & a_{2,2} +1  
\end{array}
\right)
 \right\} \Psi ,
\label{eq:okubo2}
\end{equation}
and it can be reduced to a scalar differential equation by middle convolution as the example in section \ref{subsec:ex}.
Therefore we should develop a theory of Laplace transformation as well as the theory of middle convolution which is based on Euler's integral transformation (Theorem \ref{thm:DRintegrepr}).

Several important functions are written as a solution of single differential equations of higher order
\begin{align}
y^{(n) } + a_1 (z) y^{(n-1)} + \dots + a_{n-1} (z) y' + a_n(z) y=0 , \label{eq:sglDE}
\end{align}
where $a_ i(z)$ $(i=1,\dots ,n)$ are rational functions which may have poles at prescribed points $\{ t_1, \dots ,t_r \}$. 
Note that we need to treat delicately on writing Eq.(\ref{eq:sglDE}) into the form of systems of differential equations (\ref{eq:DEirr0}) to reflect the depth of the singularities.
Oshima \cite{O2} formulated middle convolution (Euler's transformation) for single differential equations of higher order, and Hiroe \cite{Hir} studied it for the case that the differential equation has an irregular singularity at $z=\infty $
and regular singularities.
Hiroe's result includes a part of the content of section \ref{sec:mileq1} in this paper with a different situation.
Moreover he clarified a structure of Kac-Moody root system, which is based on Boalch's study \cite{Boa}.
Studies on this direction should be developed further.

\section*{Acknowledgments}
The author would like to thank Hiroshi Kawakami and Daisuke Yamakawa for discussions and comments.
He was supported by the Grant-in-Aid for Young Scientists (B) (No. 19740089) from the Japan Society for the Promotion of Science.


\begin{thebibliography}{99}
\bibitem{Boa}
P. Boalch,  Irregular connections and Kac-Moody root systems, Preprint, {\sf arXiv:0806.1050 [math.DG].}
\bibitem{CB}
W. Crawley-Boevey, {\em Duke Math. J.} {\bf 118} 339 (2003).
\bibitem{DR1}
M. Dettweiler and S. Reiter, {\em J. Symbolic Comput.} {\bf 30} 761 (2000).
\bibitem{DR2}
M. Dettweiler and S. Reiter, {\em J. Algebra} {\bf 318} 1 (2007).
\bibitem{Har}
Y. Haraoka, {\em Adv. Math.} {\bf 169} 187 (2002).
\bibitem{Hir}
K. Hiroe, Twisted Euler transform of differential equations with an irregular singular point, Preprint, {\sf arXiv:0912.5124 [math.CA].} 
\bibitem{Katz}
N. M. Katz, {\it Rigid local systems} (Princeton University Press, Princeton, 1996).
\bibitem{Kaw}
H. Kawakami, Generalized Okubo systems and the middle convolution, to appear in {\em Int. Math. Res. Notices} (2010).
\bibitem{Kos}
V. P. Kostov, {\em J. Algebra} {\bf 281} 83 (2004).
\bibitem{O}
T. Oshima, Classification of Fuchsian systems and their connection problem, Preprint, {\sf arXiv:0811.2916 [math.CA].}
\bibitem{O2}
T. Oshima, Fractional calculas of Weyl algebra and Fuchsian differential equations, Preprint.
\bibitem{TakMI}
K. Takemura, Middle convolution for systems of linear differential equations with irregular singularities, in preparation.
\bibitem{Yam}
D. Yamakawa., Middle Convolution and Harnad Duality, Preprint, {\sf arXiv:0911.3863 [math.CA].}
\bibitem{Yok}
T. Yokoyama, {\it Math. Nachr.} {\bf 279} 327 (2006).
\end{thebibliography}
\end{document}